\newtheorem{thm}{Theorem}[section]
\newtheorem{cor}[thm]{Corollary}
\newtheorem{lem}[thm]{Lemma}
\newtheorem{prop}[thm]{Proposition}
\newtheorem{prop-def}[thm]{Proposition-Definition}
\theoremstyle{definition}
\newtheorem{Def}[thm]{Definition}
\theoremstyle{remark}
\newtheorem{rmk}[thm]{\bf Remark}
\newtheorem{exm}[thm]{\bf Example}
\numberwithin{equation}{section}
\numberwithin{figure}{section}
\newcommand{\A}{\mathcal{A}}
\def\la{\lambda}
\def\A{\mathcal{A}}
\def\S{\mathcal{S}}
\def\PS{\mathbb{S}}
\def\diag{{\rm diag}}
\def\x{{\mathbf x}}
\def\y{{\mathbf y}}
\def\z{{\mathbf z}}
\def\v{{\mathbf v}}
\def\C{\mathbb{C}}
\def\Z{\mathbb{Z}}
\def\V{\mathcal{V}}
\def \PV{\mathbb{V}}
\def\I{\mathcal{I}}
\def\B{\mathcal{B}}
\def\diag{{\rm diag}}
\def \i{\mathbf{i}}
\def \A{\mathcal{A}}
\def \Dg{\mathfrak{D}}
\def \D{\mathcal{D}}
\def \Spec{\mbox{\rm Spec}}
\def\rank{\mbox{\rm rank}}
\def\sp{\mbox{\rm supp}}
\def\cl{\mbox{\rm cl}}
\def\pc{\mbox{\rm pc}}
\begin{document}
\title[Eigenvariety of Nonnegative Symmetric Weakly Irreducible Tensors]
{Eigenvariety of Nonnegative Symmetric Weakly Irreducible Tensors Associated with Spectral Radius and Its Application to Hypergraphs}

\author[Y.-Z. Fan]{Yi-Zheng Fan$^*$}
\address{School of Mathematical Sciences, Anhui University, Hefei 230601, P. R. China}
\email{fanyz@ahu.edu.cn}
\thanks{$^*$The corresponding author.
The first and the third authors were supported by National Natural Science Foundation of China \#11371028.
The second author was supported by National Natural Science Foundation of China \#11401001.}

\author[Y.-H. Bao]{Yan-Hong Bao}
\address{School of Mathematical Sciences, Anhui University, Hefei 230601, P. R. China}
\email{baoyh@ahu.edu.cn}

\author[T. Huang]{Tao Huang}
\address{School of Mathematical Sciences, Anhui University, Hefei 230601, P. R. China}
\email{huangtao@ahu.edu.cn}

\date{\today}

\subjclass[2000]{Primary 15A18, 05C65; Secondary 13P15, 14M99}



\keywords{Tensor, spectral radius, projective variety, module, hypergraph}

\begin{abstract}
For a nonnegative symmetric weakly irreducible tensor, 
  its spectral radius is an eigenvalue corresponding to a unique positive eigenvector up to a scalar called the Perron vector.
But including the Perron vector, there may have more than one eigenvector corresponding to the spectral radius.
The projective eigenvariety associated with the spectral radius is the set of the eigenvectors corresponding to the spectral radius considered in the complex projective space.

In this paper we proved that such projective eigenvariety admits a module structure,
   which is determined by the support of the tensor and can be characterized  explicitly by solving the Smith normal form of the incidence matrix of the tensor.
We introduced two parameters: the stabilizing index and the stabilizing dimension of the tensor, where the former is exactly the cardinality of the projective eigenvariety
  and the latter is the composition length of the projective eigenvariety as a module.
We give some upper bounds for the two parameters, and characterize the case that there is only one eigenvector of the tensor corresponding to the spectral radius, i.e. the Perron vector.
By applying the above results to the adjacency tensor of a connected uniform hypergraph,
 we give some upper bounds for the two parameters in terms of the structural parameters of the hypergraph such as path cover number, matching number and the maximum length of paths.
\end{abstract}

\maketitle

\section{Introduction}
A \emph{tensor} (also called \emph{hypermatrix}) $\A=(a_{i_{1} i_2 \ldots i_{m}})$ of order $m$ and dimension $n$ over a field $\mathbb{F}$ refers to a
 multiarray of entries $a_{i_{1}i_2\ldots i_{m}}\in \mathbb{F}$ for all $i_{j}\in [n]:=\{1,2,\ldots,n\}$ and $j\in [m]$.
 In this paper, we mainly consider complex tensors, i.e. $\mathbb{F}=\C$.
If all entries $a_{i_1i_2\cdots i_m}$ of $\A$ are invariant under any permutation of its indices, then $\A$ is called a \emph{symmetric tensor}.

For a given a vector $\x=(x_1, \cdots, x_n)\in \C^n$, $\A\x^{m-1}\in \C^n$, which is defined as
\begin{align*}
(\A\x^{m-1})_i=\sum_{i_2,\cdots, i_m\in [n]}a_{ii_2\cdots i_m}x_{i_2}\cdots x_{i_m}, i\in [n].
\end{align*}
Let $\I=(i_{i_1i_2\cdots i_m})$ be the \emph{identity tensor} of order $m$ and dimension $n$, that is,
$i_{i_1i_2\cdots i_m}=1$ for $i_1=i_2=\cdots=i_m$ and $i_{i_1i_2\cdots i_m}=0$ otherwise.
An \emph{eigenvalue} $\la\in \C$ of $\A$ means that the polynomial system $(\la \I-\A)\x^{m-1}=0$,
  or equivalently $\A\x^{m-1}=\la \x^{[m-1]}$, has a nontrivial solution $\x \in \C^n$ which is called an \emph{eigenvector} of $\A$ corresponding to $\la$, where $\x^{[m-1]}:=(x_1^{m-1}, \cdots, x_n^{m-1})$; see \cite{Lim,Qi1}.

The \emph{determinant} of $\A$, denoted by $\det \A$, is defined as the resultant of the polynomials $\A \x^{m-1}$,
and the \emph{characteristic polynomial} $\varphi_\A(\la)$ of $\A$ is defined as $\det(\la \I-\A)$ \cite{CPZ2,Qi1}.
It is known that $\la$ is an eigenvalue of $\A$ if and only if it is a root of $\varphi_\A(\la)$.
The \emph{algebraic multiplicity} of $\la$ as an eigenvalue of $\A$  is defined to be the multiplicity of $\la$ as a root of $\varphi_\A(\la)$.
The \emph{spectrum} of $\A$, denoted by $\Spec(\A)$, is the multi-set of the roots of $\varphi_\A(\la)$.
The largest modulus of the elements in $\Spec(\A)$ is called the \emph{spectral radius} of $\A$, denoted by $\rho(\A)$.

Let $\la$ be an eigenvalue of $\A$ and $\V_\la=\V_\la(\A)$ the set of all eigenvectors of $\A$ associated with $\la$ together with zero, i.e.
$$\V_\la=\V_\la(\A)=\{\x \in \mathbb{C}^n: \A\x^{m-1}=\la \x^{[m-1]}\}.$$
Observe that the system $(\la \I-\A)\x^{m-1}=0$ is not linear yet for $m\ge 3$,
and therefore $\V_\la$ is not a linear subspace of $\mathbb{C}^n$ in general.
In fact, $\V_\la$ forms an affine variety in $\C^n$ \cite{Ha}, which is called the \emph{eigenvariety}
of $\A$ associated with $\la$ \cite{HuYe}.
Let $\mathbb{P}^{n-1}$ be the standard complex projective spaces of dimension $n-1$.
Since each polynomial in the system $(\la \I-\A)\x^{m-1}=0$ is homogenous of degree $m-1$,
we consider the projective variety
$$\PV_\la=\PV_\la(\A)=\{\x \in \mathbb{P}^{n-1}: \A\x^{m-1}=\la \x^{[m-1]}\}.$$
which is called the \emph{projective eigenvariety} of $\A$ associated with $\la$.

By Perron-Frobenius theorem, it is known that if $\A$ is a nonnegative irreducible matrix,
then $\rho(\A)$ is a simple eigenvalue of $\A$ corresponding a positive eigenvector (called {\it Perron vector}).
So $\PV_{\rho(\A)}$ contains only one element.
But, in general for $\A$ being a nonnegative irreducible or weakly irreducible tensor,
including the Perron vector, $\A$ may have more than one eigenvector corresponding to $\rho(\A)$ up to a scalar,
i.e. $\PV_{\rho(\A)}$ may contains more than one element \cite{FHB}.
So,\emph{it is a natural problem to characterize the dimension or the cardinality of $\PV_{\rho(\A)}$.}

In this paper we focus on the projective eigenvariety $\PV_{\rho(\A)}$ of a nonnegative combinatorial symmetric weakly irreducible tensor $\A$ of order $m$ and dimension $n$.
We define a quasi-Hadamard product $\circ$ in $\PV_{\rho(\A)}$ such that $(\PV_{\rho(\A)}, \circ)$ admits a $\Z_m$-module.
Then we have a $\Z_m$-module isomorphism from $\PV_{\rho(\A)}$ to $\PS_0(\A)$,
where $$\PS_0(\A)=\{\x \in \Z_m^n: B_\A \x=0, x_1=0\},$$ and $B_\A$ is the incidence matrix of $\A$.
By solving the Smith normal form of $B_\A$,
  we can get an explicit decomposition of the $\Z_m$-module $\PS_0(\A)$  into the direct sum of cyclic submodules by elementary linear transformations.
Note that $B_\A$, and hence $\PS_0(\A)$ is only determined by the support or the zero-nonzero pattern of $\A$.
So the algebraic structure of $\PV_{\rho(\A)}$ including its cardinality is determined by the support of $\A$, which can be get by the Smith normal form of $B_\A$.
This answers the above problem partially, that is,
for a nonnegative combinatorial symmetric weakly irreducible tensor $\A$, the dimension of $\PV_{\rho(\A)}$ is zero, i.e. there are a finite number of eigenvectors of $\A$  corresponding to $\rho(\A)$ up to a scalar.

To investigate this problem, we need to introduce a group associated with a general tensor $\A$ of order $m$ and dimension $n$.
Let $\ell$ be a positive integer.
For $j=0,1,\ldots,\ell-1$, define
\begin{equation} \label{Dg}
\begin{split}
\Dg^{(j)}&=\{D: \A=e^{-\i \frac{2\pi j}{\ell}}D^{-(m-1)}\A D, d_{11}=1\},\\
\Dg &=\cup_{j=0}^{\ell-1}\Dg^{(j)},
\end{split}
\end{equation}
where  $D$ is an $n \times n$ invertible diagonal matrix in above definition.
Sometimes we use $\Dg(\A)$ and $\Dg^{(j)}(\A)$ to avoid confusions.
By Lemma \ref{group}, $\Dg$ is an abelian group containing $\Dg^{(0)}$ as a subgroup.
Furthermore, the groups  $\Dg$ and $\Dg^{(0)}$ are only determined by the support of $\A$ by Lemma \ref{comb}.
So, they are both combinatorial invariants of $\A$.

Suppose $\A$ is further nonnegative and weakly irreducible.
Then $\Dg$ and $\Dg^{(0)}$ are closely related to the projective eigenvarieties $\PV_{\la_j}$, where
\begin{equation}\label{laj}
\la_j=\rho(\A)e^{-\i \frac{2\pi j}{\ell}}, j=0,1,\ldots,\ell-1.
\end{equation}
(Note that if $\Dg^{(j)} \ne \emptyset$, then $\la_j$ is an eigenvalue of $\A$.)
The fact is as follows.
By Lemma \ref{ev} for any $\y \in \PV_{\la_j}$, $|\y|$ is the unique positive Perron vector of $\A$ up to a scalar.
Define
\begin{equation}\label{Dy}
D_\y=\diag\left(\frac{y_1}{|y_1|},\cdots,\frac{y_n}{|y_n|}\right).
\end{equation}
Then $\A=e^{-\i  \frac{2\pi j}{\ell}}D_\y^{-(m-1)}\A D_\y$ by Theorem \ref{PF2}(2).
So, if normalizing $\y$ such that $y_1=1$, then $D_\y \in \Dg^{(j)}$.
In fact, $\Dg^{(j)}$ consists of those $D_\y$ arising from $\y \in \PV_{\la_j}$ by Lemma \ref{group}(2).

Let $\mathbf{S}=\{e^{\i \frac{2 \pi j}{\ell}}\A: j=0,1,\ldots,\ell-1\}$.
Suppose that $\Dg^{(1)} \ne \emptyset$.
Then $\Dg$ acts on $\mathbf{S}$ as a permutation group by means of $\A^D:=D^{-(m-1)}\A D$,
where $\Dg^{(0)}$ acts as a stabilizer (or ``self-rotation'') of $\A$,
  and the quotient group $\Dg/\Dg^{(0)}$ acts as a ``rotation'' of $\A$ over $\mathbf{S}$.
The group $\Dg^{(0)}$ reflects the ``\emph{self-rotating behavior}'' of $\A$, and $\Dg/\Dg^{(0)}$ reflects the spectral symmetry of $\A$ because
$\Spec(\A)=e^{\i \frac{2\pi}{\ell}}\Spec(\A)$ from its generator $\Dg^{(1)}$.
So we have two important parameters of $\A$ based on $\Dg$ and $\Dg^{(0)}$.

\begin{Def}\cite{FHB}\label{ell-sym}
Let $\A$ be an $m$-th order $n$-dimensional tensor, and let $\ell$ be a positive integer.
The tensor $\A$ is called \emph{spectral $\ell$-symmetric} if
\begin{equation}\label{sym-For}\Spec(\A)=e^{\i \frac{2\pi}{\ell}}\Spec(\A).\end{equation}
The maximum number $\ell$ such that (\ref{sym-For}) holds is called the \emph{cyclic index} of $\A$ and denoted by $c(\A)$ \cite{CPZ2}.
\end{Def}

\begin{Def}\label{stab}
Let $\A$ be an $m$-th order $n$-dimensional tensor.
The \emph{stabilizing index} of $\A$, denoted by $s(\A)$, is defined as the cardinality of the group $\Dg^{(0)}(\A)$.
\end{Def}

By the above definitions, if $\A$ is nonnegative weakly irreducible,
then $\A$ is spectral $\ell$-symmetric if and only if $\Dg^{(1)}\ne \emptyset$ by Theorem \ref{PF2}(2), and $\ell$ is exactly the cardinality of the quotient $\Dg/\Dg^{(0)}$.
Furthermore, the cyclic index $c(\A)$ is exactly the number of distinct eigenvalues of $\A$ with modulus $\rho(\A)$ by Theorem \ref{specsymm},
the stabilizing index $s(\A)$ is exactly the cardinality of $\PV_{\rho(\A)}$ by Lemma \ref{group}(2), or the number of eigenvectors of $\A$  corresponding to $\rho(\A)$ up to a scalar.

Under the actions of $\Dg$ and $\Dg^{(0)}$,
Fan et.al \cite{FHB} get some structural properties of nonnegative weakly irreducible tensors similar to those of nonnegative irreducible matrices.
The cyclic index $c(\A)$ of a nonnegative weakly irreducible tensor $\A$ was given explicitly in \cite{FHB} by using the generalized traces.
In particular, if $\A$ is a symmetric tensor of order $m$, then $c(\A)|m$.
However, we know little about the stabilizing index $s(\A)$, even in the case  of $\A$ being symmetric or combinatorial symmetric,
which will be addressed in this paper.

The paper is organized as follows.
In Section 2 we introduce some basic knowledge including the properties of the groups of $\Dg$ and $\Dg^{(0)}$.
In Sections 3 we show that $\PV_{\rho(\A)}$ admits a $\Z_m$-module which is isomorphism to $\PS_0(\A)$,
and give an explicit decomposition of the $\Z_m$-module $\PS_0(\A)$ as well as $\PV_{\rho(\A)}$ by using the Smith normal form of $B_\A$.
We also give some upper bounds for the stabilizing index and stabilizing dimension of $\A$, where
the latter is defined to be the composition length of the $\Z_m$-module $\PV_{\rho(\A)}$.
Finally in Section 4 we apply those results to the adjacency tensor of hypergraphs and bound from the above the parameters by some structural parameters of the hypergraphs.

\section{Preliminaries}

\subsection{Perron-Frobenius theorem for nonnegative tensors}
Let $\A=(a_{i_{1}i_2\ldots i_{m}})$ be a tensor of order $m$ and dimension $n$.
Then $\A$ is called \emph{reducible} if there exists a nonempty proper index subset $I \subset [n]$ such that
$a_{i_{1}i_2\ldots i_{m}}=0$ for any $i_1 \in I$ and any $i_2,\ldots,i_m \notin I$;
if $\A$ is not reducible, then it is called \emph{irreducible} \cite{CPZ}.
We also can associate $\A$ with a directed graph $D(\A)$ on vertex set $[n]$ such that $(i,j)$ is an arc of $D(\A)$ if
and only if there exists a nonzero entry $ a_{ii_2\ldots i_{m}}$ such that $j \in \{ i_2\ldots i_{m}\}$.
Then $\A$ is called \emph{weakly irreducible} if $D(\A)$ is strongly connected; otherwise it is called \emph{weakly reducible} \cite{FGH}.
It is known that if $\A$ is irreducible, then it is weakly irreducible; but the converse is not true.

Chang et.al \cite{CPZ} generalize the Perron-Frobenius theorem for nonnegative matrices to nonnegative tensors.
Yang and Yang \cite{YY1,YY2,YY3} get further results for Perron-Frobenius theorem, especially for the spectral symmetry.
Friedland et.al \cite{FGH} also get some results for weakly irreducible nonnegative tensors.
We combine those results in the following theorem, where an eigenvalue is called \emph{$H^+$-eigenvalue}
(respectively, \emph{$H^{++}$-eigenvalue}) if it is corresponding to a nonnegative (respectively, positive) eigenvector.

\begin{thm}[The Perron-Frobenius theorem for nonnegative tensors]\label{PF1}~~
\begin{enumerate}
\item{\em(Yang and Yang \cite{YY1})}  If $\A$ is a nonnegative tensor of order $m$ and dimension $n$, then $\rho(\A)$ is an $H^+$-eigenvalue of $\A$.

\item{\em(Friedland, Gaubert and Han \cite{FGH})} If furthermore $\A$ is weakly irreducible, then $\rho(\A)$ is the unique $H^{++}$-eigenvalue of $\A$,
with the unique positive eigenvector, up to a positive scalar.

\item{\em(Chang, Pearson and Zhang \cite{CPZ})} If moreover $\A$ is irreducible, then $\rho(\A)$ is the unique $H^{+}$-eigenvalue of $\A$,
with the unique nonnegative eigenvector, up to a positive scalar.
\end{enumerate}
\end{thm}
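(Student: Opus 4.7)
The plan is to assemble the three clauses from their cited sources, since each is already established in the literature; the common thread is the nonlinear Perron--Frobenius machinery on the positive cone. Define the eigenmap $F:\R_{>0}^n \to \R_{>0}^n$ by $F(\x) = (\A\x^{m-1})^{[1/(m-1)]}$, which is continuous, order-preserving, and $1$-homogeneous after taking the $(m-1)$-th root; positive eigenpairs of $\A$ then correspond, up to scaling, to fixed points of the map $\bar F$ that $F$ induces on the standard simplex.

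For clause (1), the cleanest route is perturbation. Replace $\A$ by $\A_\eps = \A + \eps\,\mathcal{J}$ with $\mathcal{J}$ the all-ones tensor; then $\A_\eps$ is irreducible, so once clauses (2)--(3) are available for $\A_\eps$ one obtains a positive eigenvector $\x_\eps$ with eigenvalue $\rho(\A_\eps)$. Normalizing $\x_\eps$ on the simplex, extracting a convergent subsequence as $\eps \to 0^+$, and using continuity of $\rho$ as a function of the entries of the tensor, one passes to a limit $\x \ge 0$ satisfying $\A\x^{m-1} = \rho(\A)\x^{[m-1]}$, giving an $H^+$-eigenvector as required.

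For clause (2), I would equip the open positive cone with the Hilbert projective metric. In this metric the induced map $\bar F$ on the simplex is non-expanding, and in fact a strict contraction as soon as the directed graph $D(\A)$ is strongly connected. Banach's fixed-point theorem then produces a unique positive fixed point, hence a unique positive eigenvector up to a scalar, with eigenvalue forced to equal $\rho(\A)$ (since any other positive eigenvalue would yield a competing Collatz--Wielandt witness). The main technical obstacle is deducing the strict contraction property from weak irreducibility alone---this is the heart of Friedland--Gaubert--Han's argument, which I would follow graph-theoretically by showing that some iterate of $F$ maps the boundary of the nonnegative cone strictly into the interior.

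For clause (3), given clause (2) the unique positive eigenvector already exists, so it only remains to rule out nonnegative eigenvectors with some zero coordinates. Assume $\x \ge 0$, $\x \ne 0$, $\A\x^{m-1} = \rho(\A)\x^{[m-1]}$, and set $I = \{i : x_i > 0\}$; suppose $I \subsetneq [n]$. For $i \notin I$ the equation reads $\sum a_{i i_2\cdots i_m}\, x_{i_2}\cdots x_{i_m} = 0$, which forces $a_{i i_2\cdots i_m} = 0$ whenever $i \notin I$ and all $i_2,\ldots,i_m \in I$; but this is precisely the definition of reducibility with respect to the subset $I$, contradicting irreducibility. Hence $\x$ is positive and, by clause (2), unique up to a scalar. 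The principal difficulty in the whole theorem is therefore concentrated in clause (2); the other two clauses reduce to it modulo standard continuity and combinatorial bookkeeping.
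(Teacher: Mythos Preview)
The paper does not supply its own proof of this theorem: it is stated as a compilation of known results, with each clause attributed to its original source (Yang--Yang, Friedland--Gaubert--Han, Chang--Pearson--Zhang), and no argument is given either in the body or in the Appendix (which only treats Lemma~\ref{ev}, Theorem~\ref{PF2}, and Theorem~\ref{specsymm}). So there is nothing in the paper to compare your sketch against.

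That said, your outline is a faithful summary of the standard arguments behind the cited references and is essentially correct. Two small remarks. In clause~(3), the index set witnessing reducibility is the complement $[n]\setminus I$, not $I$ itself: the definition requires $a_{i_1 i_2\cdots i_m}=0$ for $i_1$ in the distinguished set and $i_2,\ldots,i_m$ outside it, which is what you obtain with $J=[n]\setminus I$. Also, your argument there should be phrased for an arbitrary $H^+$-eigenvalue $\lambda$, not just $\rho(\A)$; the computation is identical (the right-hand side $\lambda x_i^{m-1}$ still vanishes for $i\notin I$), and once positivity of the eigenvector is established, clause~(2) forces $\lambda=\rho(\A)$, giving the full uniqueness statement.
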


\begin{lem}\cite{YY3} \label{ev}
Let $\A$ be a weakly irreducible nonnegative tensor.
Let $\y$ be an eigenvector of $\A$ corresponding to an eigenvalue $\la$ with $|\la|=\rho(\A)$.
Then $|\y|$ is the unique positive eigenvector corresponding to $\rho(\A)$ up to a scalar.
\end{lem}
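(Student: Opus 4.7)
The plan is to reduce the lemma to Theorem \ref{PF1}(2): it suffices to prove that $|\y|$ is a positive eigenvector of $\A$ for the eigenvalue $\rho(\A)$.

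First, I take absolute values on both sides of the eigenvalue equation $\A\y^{m-1}=\la\y^{[m-1]}$. Using $|\la|=\rho(\A)$, the triangle inequality, and nonnegativity of the entries of $\A$, I obtain for each $i\in [n]$
\begin{equation*}
\rho(\A)\,|y_i|^{m-1}= \bigl|(\A\y^{m-1})_i\bigr|\le \sum_{i_2,\dots,i_m}a_{ii_2\cdots i_m}|y_{i_2}|\cdots|y_{i_m}|=(\A|\y|^{m-1})_i,
\end{equation*}
so $\x:=|\y|$ is a nonzero nonnegative vector with $\A\x^{m-1}\ge \rho(\A)\,\x^{[m-1]}$ componentwise.

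Second, I compare $\x$ to the Perron vector $\mathbf u>0$ supplied by Theorem \ref{PF1}(2). Setting $t=\max_i x_i/u_i>0$ gives $t\mathbf u-\x\ge 0$ with equality at some coordinate $i_0$. At that coordinate,
\begin{equation*}
\bigl(\A(t\mathbf u)^{m-1}\bigr)_{i_0}= t^{m-1}\rho(\A)\, u_{i_0}^{m-1}=\rho(\A)\, x_{i_0}^{m-1}\le (\A\x^{m-1})_{i_0},
\end{equation*}
while at the same time $(\A(t\mathbf u)^{m-1})_{i_0}\ge (\A\x^{m-1})_{i_0}$ since $t\mathbf u\ge \x\ge 0$ and the entries of $\A$ are nonnegative. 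Thus each summand $a_{i_0 i_2\cdots i_m}\bigl[(tu_{i_2})\cdots(tu_{i_m})-x_{i_2}\cdots x_{i_m}\bigr]$ vanishes; because $\mathbf u>0$, this forces $tu_{i_j}=x_{i_j}$ for every $j\in\{i_2,\dots,i_m\}$ whenever $a_{i_0 i_2\cdots i_m}\ne 0$. In the language of the digraph $D(\A)$, every out-neighbor of $i_0$ is again an equality coordinate; strong connectivity of $D(\A)$ (weak irreducibility) then propagates equality to all of $[n]$, forcing $\x=t\mathbf u>0$.

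The main obstacle is precisely this propagation step: because $\A\x^{m-1}$ is multilinear rather than linear in $\x$, the matrix trick of dualizing by a left Perron eigenvector is unavailable, and the combinatorial argument through nonzero tensor entries and $D(\A)$ is the natural substitute. Once $|\y|$ is shown to be a positive eigenvector for $\rho(\A)$, the uniqueness clause of Theorem \ref{PF1}(2) gives $|\y|=c\mathbf u$ for some $c>0$, finishing the proof.
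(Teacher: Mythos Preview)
Your proof is correct and follows essentially the same approach as the paper's: both start from the triangle-inequality bound $\A|\y|^{m-1}\ge\rho(\A)\,|\y|^{[m-1]}$, compare $|\y|$ to the Perron vector, and use a maximum argument together with weak irreducibility to force equality. The only cosmetic difference is that the paper first normalizes $\A$ to a stochastic tensor $\B=\rho(\A)^{-1}D^{-(m-1)}\A D$ (so the Perron vector becomes $\mathbbm{1}$ and your ratio $t=\max_i x_i/u_i$ becomes simply $\delta=\max_i z_i$) and phrases the irreducibility step as a single contradiction rather than an explicit propagation along $D(\A)$.
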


According to the definition of tensor product in \cite{Shao}, for a tensor $\A$ of order $m$ and dimension $n$, and two diagonal matrices $P,Q$ both of dimension $n$,
the product $P\A Q$ has the same order and dimension as $\A$, whose entries are given by
\[(P\A Q)_{i_1i_2\ldots i_m}=p_{i_1i_1}a_{i_1i_2\ldots i_m}q_{i_2i_2}\ldots q_{i_mi_m}.\]
If $P=Q^{-1}$, then $\A$ and $P^{m-1}\A Q$ are called \emph{diagonal similar}.
It is proved that two diagonal similar tensors have the same spectrum \cite{Shao}.

\begin{thm}\cite{YY3}\label{PF2}
Let $\A$ and $\B$ be $m$-th order $n$-dimensional tensors with $|\B|\le \A$. Then
\begin{enumerate}

\item[(1)] $\rho(\B)\le \rho(\A)$.

\item[(2)] If $\A$ is weakly irreducible and $\rho(\B)=\rho(\A)$, where $\la=\rho(\A)e^{\i\theta}$ is
an eigenvalue of $\B$ corresponding to an eigenvector $\y$, then $\y=(y_1, \cdots, y_n)$ contains no zero entries, and $\A=e^{-\i\theta}D^{-(m-1)}\B D$,
where $D=\diag(\frac{y_1}{|y_1|}, \cdots, \frac{y_n}{|y_n|})$.
\end{enumerate}
\end{thm}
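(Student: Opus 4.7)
For part (1) of Theorem \ref{PF2}, the plan is a standard Collatz-Wielandt comparison. I would pick an eigenpair $(\mu,\z)$ of $\B$ with $|\mu|=\rho(\B)$, take componentwise moduli in $\B\z^{m-1}=\mu\z^{[m-1]}$, and chain the triangle inequality with $|\B|\le \A$ to reach
\[
\A|\z|^{m-1}\ge \rho(\B)\,|\z|^{[m-1]}.
\]
From this I would deduce $\rho(\A)\ge \rho(\B)$ via the weak Collatz-Wielandt inequality for nonnegative tensors. Because $|\z|$ may have zero coordinates, I would either approximate $\A$ by the positive tensor $\A+\eps\mathbf{J}$ (with $\mathbf{J}$ the all-ones tensor) and let $\eps\to 0^{+}$ using continuity of the spectral radius, or restrict to the principal subtensor supported on $\sp(|\z|)$ and use monotonicity of $\rho$ under passing to principal subtensors.

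For part (2), equality $\rho(\B)=\rho(\A)$ upgrades the display above to $\A|\y|^{m-1}\ge \rho(\A)|\y|^{[m-1]}$. The first step is to show $|\y|>0$. Put $K=\sp(|\y|)$; restricting this inequality to coordinates $i\in K$ shows that the principal subtensor $\A[K]$ satisfies a Collatz-type inequality at level $\rho(\A)$ with the positive test vector $(|y_i|)_{i\in K}$, so $\rho(\A[K])\ge \rho(\A)$. But for a weakly irreducible nonnegative tensor every proper principal subtensor has strictly smaller spectral radius, hence $K=[n]$ and $|\y|>0$. Then $|\y|$ is a positive witness for $\rho(\A)$, and Theorem \ref{PF1}(2) together with uniqueness of the positive Perron vector forces the inequality to become the coordinatewise equality $\A|\y|^{m-1}=\rho(\A)|\y|^{[m-1]}$. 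In particular $\y$ has no zero entries and $D=\diag(y_1/|y_1|,\ldots,y_n/|y_n|)$ is a well-defined unitary diagonal matrix.

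The final step unravels the two triangle-type inequalities that are now forced to be equalities. Equality in $|\B|\le \A$ gives $|b_{i i_2\cdots i_m}|=a_{i i_2\cdots i_m}$ for every index tuple, while equality in the triangle inequality inside $(\B\y^{m-1})_i=\lambda y_i^{m-1}$ forces every nonzero summand $b_{i i_2\cdots i_m}y_{i_2}\cdots y_{i_m}$ to be a nonnegative real multiple of $\lambda y_i^{m-1}$. Writing $y_j=|y_j|d_j$ with $d_j=y_j/|y_j|$ and combining these two conditions, I would obtain
\[
b_{i i_2\cdots i_m}\,d_{i_2}\cdots d_{i_m}=e^{\i\theta}\,a_{i i_2\cdots i_m}\,d_i^{m-1}
\]
for every tuple $(i,i_2,\ldots,i_m)$, which is exactly the entrywise form of $\A=e^{-\i\theta}D^{-(m-1)}\B D$.

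The main obstacle is the squeezing step at the start of part (2): establishing $|\y|>0$ and promoting the Collatz-type inequality to a coordinatewise equality without already invoking the structural conclusion. The critical ingredient is a tensor analogue of the classical matrix fact that, for an irreducible (resp.\ weakly irreducible) nonnegative matrix, every proper principal submatrix has strictly smaller spectral radius; adapting this strict-monotonicity property to the weakly irreducible tensor setting is where the real technical work lies.
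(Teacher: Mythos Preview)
Your outline for part~(1) matches the paper's: take an eigenpair of $\B$ on the spectral circle, apply the triangle inequality and $|\B|\le\A$, and feed the resulting super-eigenvector inequality into a Collatz--Wielandt formula. The paper uses the max--min characterization $\rho(\A)=\max_{\x\gneq 0}\min_{x_i>0}(\A\x^{m-1})_i/x_i^{m-1}$ from \cite{YY1}, which absorbs the zero-coordinate issue directly, so your $\eps$-perturbation or subtensor detour is unnecessary but harmless.

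For part~(2) the two arguments diverge at the squeezing step. The paper does \emph{not} use strict monotonicity of $\rho$ under proper principal subtensors. Instead it recycles the proof of Lemma~\ref{ev}: conjugate $\A$ by its Perron vector to a stochastic tensor $\B'=\rho(\A)^{-1}D^{-(m-1)}\A D$, and show by a direct ``largest coordinate'' argument that any nonzero nonnegative $\z$ with $\B'\z^{m-1}\ge\z^{[m-1]}$ must be a scalar multiple of $\mathbbm{1}$. This single combinatorial step yields both $|\y|>0$ and the coordinatewise equality $\A|\y|^{m-1}=\rho(\A)|\y|^{[m-1]}$ at once. Your route is also viable, but note that the sentence ``Theorem~\ref{PF1}(2) together with uniqueness of the positive Perron vector forces the inequality to become equality'' is not justified as written: uniqueness of the positive eigenvector does not by itself upgrade a strict super-eigenvector inequality at one coordinate to equality everywhere. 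You still need an argument here --- either the paper's stochastic reduction, or a ratio comparison with the actual Perron vector $\x$ (look at the index achieving $\max_i |y_i|/x_i$ and use weak irreducibility to propagate equality). Your final paragraph correctly flags this as the crux; the point is that strict subtensor monotonicity handles only the positivity of $|\y|$, not the promotion to equality.

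For the final identity the paper works at the tensor level: from $\A|\y|^{m-1}=\rho(\A)|\y|^{[m-1]}=|\B||\y|^{m-1}$ and $|\y|>0$ it first gets $\A=|\B|$, then writes $\y=D|\y|$ and computes $(\A-e^{-\i\theta}D^{-(m-1)}\B D)|\y|^{m-1}=0$; since the tensor in parentheses has entrywise modulus $\A-|\B|=0$ after taking real parts, it vanishes. Your entrywise unwinding of the two equality cases is equivalent and equally valid.
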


\begin{thm}\cite{YY3}\label{specsymm}
Let $\A$ be an $m$-th order $n$-dimensional nonnegative weakly irreducible tensor.
Suppose that $\A$ has $k$ distinct eigenvalues with modulus $\rho(\A)$ in total. Then
these eigenvalues are $\rho(\A)e^{\i\frac{2\pi j}{k}}$, $j=0, 1, \cdots, k-1$.
Furthermore, there exists a diagonal matrix $D$ with unit diagonal entries such that
\[\A=e^{-\i\frac{2\pi}{k}}D^{-(m-1)}\A D,\]
and the spectrum of $\A$ keeps invariant under a rotation of angle $\frac{2\pi}{k}$ (but not a smaller positive angle) of the complex plane.
\end{thm}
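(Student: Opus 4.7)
The plan is to show that the $k$ eigenvalues of $\A$ with modulus $\rho(\A)$ correspond, via Theorem~\ref{PF2}(2), to a collection of unit-modulus diagonal matrices that forms a finite subgroup, from which the cyclic structure of these eigenvalues follows automatically. My starting point is to enumerate the distinct eigenvalues of maximum modulus as $\la_j = \rho(\A) e^{\i \theta_j}$ for $j = 0, 1, \ldots, k-1$, with $\theta_0 = 0$. By Lemma~\ref{ev}, any eigenvector $\y^{(j)}$ associated with $\la_j$ satisfies $|\y^{(j)}|$ equal (up to scaling) to the Perron vector, so $\y^{(j)}$ has no zero entries. Applying Theorem~\ref{PF2}(2) with $\B = \A$ then produces, for each $j$, a diagonal matrix $D_j$ of unit-modulus entries (normalised so $(D_j)_{11} = 1$) satisfying $\A = e^{-\i \theta_j} D_j^{-(m-1)} \A D_j$.

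Next I would verify that the set $G := \{e^{\i \theta_j} : 0 \le j \le k-1\}$ is a subgroup of the unit circle. For closure, combining the relations for $\theta_i$ and $\theta_j$ yields
$$\A = e^{-\i(\theta_i+\theta_j)}(D_iD_j)^{-(m-1)}\A(D_iD_j),$$
and, reading this backwards through Theorem~\ref{PF2}(2), one recovers that $\rho(\A)e^{\i(\theta_i+\theta_j)}$ is an eigenvalue of $\A$ of modulus $\rho(\A)$ (the eigenvector being $D_iD_j$ applied entrywise to the Perron vector). Closure under inversion is analogous using $D_j^{-1}$. Being a finite subgroup of $S^1$, $G$ is cyclic of order $k$, hence $G = \{e^{\i 2\pi j/k} : j = 0, \ldots, k-1\}$, which gives the claimed enumeration of the eigenvalues of modulus $\rho(\A)$.

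Choosing $D$ to correspond to a generator $e^{\i 2\pi/k}$ of $G$ yields the displayed identity $\A = e^{-\i 2\pi/k}D^{-(m-1)}\A D$. Since diagonal-similar tensors have the same spectrum \cite{Shao}, and since $\Spec(c\B) = c\,\Spec(\B)$ by homogeneity of the characteristic polynomial of degree $n(m-1)^{n-1}$ in the tensor entries, we obtain $\Spec(\A) = e^{-\i 2\pi/k}\Spec(\A)$. Minimality of the angle $2\pi/k$ is then automatic: if the spectrum were invariant under a positive rotation by some $\alpha < 2\pi/k$, then $\rho(\A)e^{\i\alpha}$ would be an eigenvalue of modulus $\rho(\A)$ outside the listed $k$ values, contradicting the hypothesis.

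The main obstacle I expect is the middle step: carefully justifying that the formal identity $\A = e^{-\i(\theta_i+\theta_j)}(D_iD_j)^{-(m-1)}\A(D_iD_j)$ genuinely witnesses an eigenvalue-eigenvector pair, rather than being read off only as an algebraic manipulation. One needs to produce the explicit eigenvector (which forces checking it is nowhere zero and verifies the eigen-equation $\A\x^{m-1} = \la\x^{[m-1]}$), and it is at this point that the weak-irreducibility hypothesis is essential. Once group closure is secured, the classification of finite subgroups of $S^1$ as cyclic groups takes care of the remainder in a line.
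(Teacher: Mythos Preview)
Your argument is correct and close in spirit to the paper's, but the paper takes a shorter route. Rather than establishing that the full set $G=\{e^{\i\theta_j}\}$ is a subgroup (closure under products and inverses) and then invoking the classification of finite subgroups of $S^1$, the paper simply picks the eigenvalue $\rho(\A)e^{\i\theta_1}$ with \emph{smallest} positive argument, obtains from Theorem~\ref{PF2}(2) a single $D$ with $\A=e^{-\i\theta_1}D^{-(m-1)}\A D$, and observes that diagonal similarity of $\A$ with $e^{-\i\theta_1}\A$ forces $\rho(\A)e^{\i t\theta_1}$ to be an eigenvalue for every $t\ge 1$; since there are only $k$ of them, $\theta_1=2\pi/k$. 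This avoids checking closure for arbitrary pairs and closure under inverses. Your group-theoretic packaging is cleaner conceptually and generalises more readily (indeed it is essentially the mechanism behind Lemma~\ref{group}), while the paper's minimal-angle iteration is the quickest path to this particular statement.

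Regarding your stated obstacle: the step from $\A=e^{-\i\alpha}D^{-(m-1)}\A D$ back to ``$\rho(\A)e^{\i\alpha}$ is an eigenvalue'' is entirely elementary and does not require reading Theorem~\ref{PF2}(2) backwards. Applying both sides to the positive Perron vector $\v_p$ gives $\A(D\v_p)^{m-1}=\rho(\A)e^{\i\alpha}(D\v_p)^{[m-1]}$ directly, and $D\v_p$ is nowhere zero because $\v_p$ is positive and $D$ has unit-modulus diagonal. So there is no genuine difficulty at that point.
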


Lemma \ref{ev}, Theorem \ref{PF2} and Theorem \ref{specsymm} were given by Yang and Yang \cite{YY3} and posted in arXiv.
For completeness and for convenience to the reads, we rewrite their proofs in the Appendix of this paper.

\subsection{The groups associated with tensors}
The {\it support} of $\A$, also called the {\it zero-nonzero pattern} of $\A$ in \cite{Shao}, denoted by $\sp(\A)=(s_{i_1i_2\ldots i_m})$, is defined as a tensor with same order and dimension as $\A$, such that $s_{i_1\ldots i_m}=1$ if $a_{i_1\ldots i_m} \ne 0$,
and $s_{i_1\ldots i_m}=0$ otherwise.
Let $\B$ be a tensor with same order and dimension as $\A$.
We say $\B$ is a \emph{subpattern} of $\A$ if $\sp(\B) \le \sp(\A)$.
A tensor is called {\it combinatorial symmetric} if its support is symmetric.
The part of the following lemma was shown by Fan et.al \cite{FHB} and posted in arXiv.
For completeness we rewrite some of its proof here.

\begin{lem}\cite{FHB} \label{group}
Let $\A$ be an $m$-th order $n$-dimensional nonnegative weakly irreducible tensor, which is spectral $\ell$-symmetric.
Let $\la_j=\rho(\A)e^{\i \frac{2 \pi j}{\ell}}$ be the eigenvalue of $\A$, $j=0,1,\ldots,\ell-1$.
Let $\Dg$ and $\Dg^{(j)}$ be as defined in (\ref{Dg}) for $j=0,1,\ldots,\ell-1$.
Then the following results hold.

\begin{enumerate}

\item $\Dg$ is an abelian group under the usual matrix multiplication, where $\Dg^{(0)}$ is a subgroup of $\Dg$, and
$\Dg^{(j)}$ is a coset of $\Dg^{(0)}$ in $\Dg$ for $j \in [\ell-1]$.

\item there is a bijection between $\PV_{\la_j}(\A)$ and $\Dg^{(j)}(\A)$, and
\begin{equation} \label{2nd}
\Dg^{(j)}(\A)=\{D_\y: \y \in \PV_{\la_j}(\A), y_1=1\}, j=0,1,\ldots,\ell-1.
\end{equation}

\item If further $\A$ is combinatorial symmetric, then $\ell\mid m$, and $D^{m}=\I$ for any $D \in \Dg$.
\end{enumerate}
\end{lem}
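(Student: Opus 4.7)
The overall plan is to treat the three claims in order, using Theorem \ref{PF2}(2) as the bridge between the algebraic definition of $\Dg^{(j)}$ and the eigenvectors for $\la_j$.

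For part (1), I would directly verify the group axioms. Composing the defining relations for $D_1\in\Dg^{(j_1)}$ and $D_2\in\Dg^{(j_2)}$ gives
\[\A = e^{-\i 2\pi(j_1+j_2)/\ell}(D_1 D_2)^{-(m-1)}\A\,(D_1 D_2),\]
and clearly $(D_1 D_2)_{11}=1$, so $D_1 D_2\in\Dg^{((j_1+j_2)\bmod \ell)}$. The identity lies in $\Dg^{(0)}$, and rearranging the defining equation for $D\in\Dg^{(j)}$ shows $D^{-1}\in\Dg^{((-j)\bmod \ell)}$. Commutativity is automatic for diagonal matrices. The assignment $D\mapsto j$ when $D\in\Dg^{(j)}$ is then a well-defined group homomorphism $\Dg\to\Z_\ell$ with kernel $\Dg^{(0)}$ and fibers $\Dg^{(j)}$, which gives the subgroup/coset structure.

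For part (2), the forward map $\y\mapsto D_\y$ is well-defined on the normalized representative $y_1=1$ because Lemma \ref{ev} ensures $y_i\neq 0$ for all $i$. Applying Theorem \ref{PF2}(2) with $\B=\A$ and $\la=\la_j$ produces precisely $\A = e^{-\i 2\pi j/\ell}D_\y^{-(m-1)}\A D_\y$, so $D_\y\in\Dg^{(j)}$. For the reverse direction, given $D\in\Dg^{(j)}$ I take the positive Perron vector $\x$ of $\A$ normalized by $x_1=1$ and set $\y=D\x$. The entry-level consequence of the defining relation, namely $e^{\i 2\pi j/\ell}d_{ii}^{m-1}=d_{i_2 i_2}\cdots d_{i_m i_m}$ whenever $a_{i i_2\cdots i_m}\neq 0$, makes the expansion of $(\A\y^{m-1})_i$ collapse to $e^{\i 2\pi j/\ell}d_{ii}^{m-1}(\A\x^{m-1})_i = \la_j y_i^{m-1}$, giving $\y\in\PV_{\la_j}$ with $y_1=1$. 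To see these two assignments are mutually inverse I need $|d_{ii}|=1$ for every $D\in\Dg$. I would establish this by taking absolute values in the defining equation, which shows $|D|$ satisfies the $\Dg^{(0)}$-relation; the construction in the previous sentence then exhibits $|D|\x$ as a \emph{positive} eigenvector for $\rho(\A)$, and the uniqueness clause of Theorem \ref{PF1}(2) combined with $|d_{11}|=1$ forces $|D|=\Id$.

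For part (3), set $t_i=d_{ii}$ and consider the scalar relation $e^{\i 2\pi j/\ell}t_{i_1}^{m-1}=t_{i_2}\cdots t_{i_m}$ for a fixed $D\in\Dg^{(j)}$ and any nonzero entry $a_{i_1 i_2\cdots i_m}$. Combinatorial symmetry lets me swap the positions of $i_1$ and $i_2$ to obtain a second such relation; dividing the two yields $t_{i_1}^m=t_{i_2}^m$. Weak irreducibility (strong connectivity of $D(\A)$) then propagates this equality along directed paths, so $t_i^m=t_1^m=1$ for all $i$ and $D^m=\Id$. Specializing to $D\in\Dg^{(1)}$ and invoking part (1), $\Id=D^m\in\Dg^{(m\bmod \ell)}$ forces $\ell\mid m$. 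The main obstacle is the inverse check inside part (2), specifically the argument that $|D|=\Id$ via the uniqueness of the Perron vector; once that is secured, the remaining identifications and the symmetry argument in part (3) are bookkeeping on the defining equations.
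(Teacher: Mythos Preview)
Your proposal is correct and follows essentially the same route as the paper: closure and inverses by composing the defining relations for (1), the bijection $\y\mapsto D_\y$ with inverse $D\mapsto D\x$ (for $\x$ the Perron vector) for (2), and exploitation of combinatorial symmetry at the entry level together with weak irreducibility for (3). Two minor differences worth noting: you are more explicit than the paper about why $|D|=\Id$ (the paper tacitly uses this when it asserts $\psi(D\x)=D$ and later writes $d_{ii}=e^{\i\theta_i}$), and in (3) you obtain $D^m=\Id$ first from a pairwise index swap and then read off $\ell\mid m$ from $\Id=D^m\in\Dg^{(m\bmod\ell)}$, whereas the paper sums the entry relation over all $m$ index positions to get $\ell\mid m$ directly.
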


\begin{proof}
(1) Surely the identity matrix $\I \in \Dg^{(0)}$. For any two matrices $D^{(j_1)} \in \Dg^{(j_1)}$ and $D^{(j_2)} \in \Dg^{(j_2)}$, we have
\[\A = e^{-\i \frac{2\pi j_1}{\ell}} {D^{(j_1)}}^{-(m-1)}\A D^{(j_1)},
    \A=e^{-\i \frac{2\pi j_2}{\ell}} {D^{(j_2)}}^{-(m-1)}\A D^{(j_2)}.\]
Then \[ \A = e^{-\i \frac{2\pi (j_1+j_2)}{\ell}} {(D^{(j_1)}D^{(j_2)})}^{-(m-1)}\A (D^{(j_1)}D^{(j_2)}).\]
So $D^{(j_1)}D^{(j_2)} \in \Dg^{(j_1+j_2)}$, where the superscript is taken modulo $\ell$.
It is seen that $(D^{(j_1)})^{-1}\in \Dg^{(-j_1)}$.
So $\Dg$ is an abelian group under the usual matrix multiplication.

Following the same routine, one can verify that $\Dg^{(0)}$ is a subgroup of $\Dg$.
Taking a $D^{(j)} \in \Dg^{(j)}$, one can show that $\Dg^{(j)}=\Dg^{(0)} D^{(j)}$, i.e. $\Dg^{(j)}$ is a coset of $\Dg^{(0)}$
by verifying $\bar{D}^{(j)}{D^{(j)}}^{-1} \in \Dg^{(0)}$ and $D^{(0)}{D^{(j)}} \in \Dg^{(j)}$ for any $\bar{D}^{(j)} \in \Dg^{(j)}$ and $D^{(0)} \in \Dg^{(0)}$.

(2) For any $\y \in \PV_{\la_j}(\A)$, by Lemma \ref{ev}, $|\y|>0$.
Without loss of generality, assume $y_1=1$.
Define $D_{\y}$ as in (\ref{Dy}).
Then $D_{\y} \in \Dg^{(j)}(\A)$ by Theorem \ref{PF2}(2).
So we get a injective map $$\psi: \PV_{\la_j}(\A) \to \Dg^{(j)}(\A) \;(\y \mapsto D_{\y}).$$
On the other hand, for any $D \in \Dg^{(j)}(\A)$, we have $\A = e^{-\i \frac{2 \pi j}{\ell}}D^{-(m-1)}\A D$.
Note that by Theorem \ref{PF1}(2), there exists a positive eigenvector $\x$ with $x_1=1$ such that $\A \x^{m-1}=\la_0 \x^{[m-1]}$.
So we have $(e^{-\i \frac{2 \pi j}{\ell}}D^{-(m-1)}\A D) \x^{m-1}=\la_0 \x^{[m-1]}$, and hence $$\A (D\x)^{m-1}=\la_j (D\x)^{[m-1]}.$$
which implies that $D\x \in \PV_{\la_j}(\A)$ with $(D\x)_1=1$.
As $\psi(D\x)=D$, $\psi$ is also surjective.

(3)  Suppose that $\A$ is combinatorial symmetric.
From $\A =e^{-\i \frac{2\pi j}{\ell}}  D^{-(m-1)}\A D$, letting $d_{ii}=e^{\i \theta_i}$ for $i \in [n]$ where $\theta_1=0$,
  if $a_{i_1 \ldots i_m} \ne 0$, we have
 \begin{equation}\label{ell-j-eq}\frac{2\pi j}{\ell}+m\theta_{i_1} \equiv \theta_{i_1}+\cdots+\theta_{i_m} \mod 2\pi.\end{equation}
  As $\A$ is combinatorial symmetric, replacing $i_1$ in left side of (\ref{ell-j-eq}) by $i_l$ and summing over all $l=1,\ldots,m$, we have
 \[\frac{2\pi jm}{\ell}+m\sum_{l=1}^m\theta_{i_l}\equiv m\sum_{l=1}^m\theta_{i_l} \mod 2\pi.\]
  So, $\frac{2\pi jm}{\ell}\equiv 0 \mod 2\pi$.
  If taking $j=1$, we have $\ell\mid m$.
  Also from (\ref{ell-j-eq}), we have
\[m\theta_{i_1} = \ldots=m\theta_{i_m} \mod 2\pi.\]
  As $\A$ is weakly irreducible and $\theta_1=0$,
  we get $m\theta_i \equiv 0 \mod 2\pi.$
  So $D^m=\I$ for any $D \in \Dg$.
\end{proof}

We next show that the group $\Dg(\A)$ and the subgroup $\Dg^{(0)}(\A)$ are combinatorial invariants,
that is, they are only determined by the support of $\A$.

\begin{lem}\label{comb}
Let $\A,\B$ be two tensors both with order $m$ and dimension $n$.
\begin{enumerate}
\item If $\sp(\B) \le \sp(\A)$, then $\Dg^{(j)}(\A) \subseteq \Dg^{(j)}(\B)$ for $j=0,\ldots,\ell-1$, and
$\Dg(\A)$ and $\Dg^{(0)}(\A)$ are respectively the subgroups of $\Dg(\B)$ and $\Dg^{(0)}(\B)$.

\item If $\sp(\A)=\sp(\B)$, then $\Dg(\A)=\Dg(\B)$, $\Dg^{(0)}(\A)=\Dg^{(0)}(\B)$.
If $\A,\B$ are further nonnegative and weakly irreducible, then $c(\A)=c(\B)$.
\end{enumerate}
\end{lem}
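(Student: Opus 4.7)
The plan is to reduce the defining identity of $\Dg^{(j)}$ to an entrywise form. Expanding $\A = e^{-\i \frac{2\pi j}{\ell}} D^{-(m-1)} \A D$ componentwise using the convention for the tensor--diagonal product, this identity is equivalent to requiring, at every multi-index $(i_1,\ldots,i_m)$,
\[
a_{i_1\ldots i_m}\bigl(1 - e^{-\i \frac{2\pi j}{\ell}} d_{i_1i_1}^{-(m-1)} d_{i_2i_2}\cdots d_{i_mi_m}\bigr) = 0.
\]
So $D \in \Dg^{(j)}(\A)$ if and only if $d_{11}=1$ and the bracketed factor vanishes at every multi-index where $\A$ is nonzero. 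The essential observation is that the family of scalar constraints this imposes on the diagonal of $D$ depends only on $\sp(\A)$; the actual values of the nonzero entries are irrelevant.

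Given this reformulation, part (1) follows at once: if $\sp(\B) \le \sp(\A)$, then every multi-index at which $\B$ is nonzero is also one at which $\A$ is nonzero, so any $D$ satisfying all the $\A$-constraints automatically satisfies the weaker $\B$-constraints. This yields $\Dg^{(j)}(\A) \subseteq \Dg^{(j)}(\B)$ for each $j$, and taking the union over $j$ gives $\Dg(\A) \subseteq \Dg(\B)$. The subgroup assertions then follow from Lemma \ref{group}(1), which guarantees that all four sets in question are groups under matrix multiplication.

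For part (2), applying part (1) with the roles of $\A$ and $\B$ exchanged yields $\Dg^{(j)}(\A) = \Dg^{(j)}(\B)$ for each $j$, whence $\Dg(\A) = \Dg(\B)$ and $\Dg^{(0)}(\A) = \Dg^{(0)}(\B)$. For the cyclic index assertion under the additional assumption that $\A$ and $\B$ are nonnegative and weakly irreducible, I would invoke the characterization recorded in the paragraph following Definition \ref{stab}: such a tensor is spectral $\ell$-symmetric if and only if $\Dg^{(1)}$ (defined with that specific $\ell$) is nonempty, so $c(\A)$ equals the largest $\ell$ for which $\Dg^{(1)}(\A) \ne \emptyset$. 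Since the nonemptiness of $\Dg^{(1)}$ is a support-only property for every $\ell$, one concludes $c(\A) = c(\B)$.

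I do not foresee any substantive obstacle; the proof is essentially a bookkeeping exercise in unwinding definitions. The only care required is in the cyclic index step, where one must read the equality $\Dg^{(1)}(\A) = \Dg^{(1)}(\B)$ as holding for every $\ell$ simultaneously before concluding that the largest $\ell$ witnessing nonemptiness is the same for both tensors.
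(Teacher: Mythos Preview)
Your proposal is correct and follows essentially the same approach as the paper: both expand the defining identity $\A = e^{-\i \frac{2\pi j}{\ell}} D^{-(m-1)} \A D$ entrywise, observe that the resulting constraint on $D$ at a given multi-index depends only on whether $a_{i_1\ldots i_m}$ is nonzero, and deduce the inclusion $\Dg^{(j)}(\A)\subseteq \Dg^{(j)}(\B)$ directly from $\sp(\B)\le\sp(\A)$. For the cyclic index, the paper phrases the argument as ``$\A$ is spectral $\ell$-symmetric iff $\B$ is'' via Theorem~\ref{PF2}(2), which is exactly the content of the characterization you invoke from the paragraph after Definition~\ref{stab}.
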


\begin{proof}
Suppose that $D=\hbox{diag}(d_{1}, \ldots, d_n) \in \Dg^{(j)}(\A)$ for a fixed $j$.
Then $\A=e^{-\i \frac{2\pi j}{\ell}}D^{-(m-1)}\A D$, or equivalently
\begin{equation}\label{comb1}
a_{i_1i_2\ldots i_m}=e^{-\i \frac{2\pi j}{\ell}} d_{i_1}^{-(m-1)} a_{i_1i_2\ldots i_m} d_{i_2} \cdots d_{i_m}, i_j \in [n], j \in [m].
\end{equation}
If $a_{i_1i_2\ldots i_m} \ne 0$, then
\begin{equation}\label{comb2}
e^{\i \frac{2\pi j}{\ell}} d_{i_1}^{m}=d_{i_1} d_{i_2} \cdots d_{i_m}.
\end{equation}
Since  $\sp(\B) \le \sp(\A)$, by (\ref{comb2}) we have
\begin{equation}\label{comb1}
b_{i_1i_2\ldots i_m}=e^{-\i \frac{2\pi j}{\ell}} d_{i_1}^{-(m-1)} b_{i_1i_2\ldots i_m} d_{i_2} \cdots d_{i_m}, i_j \in [n], j \in [m],
\end{equation}
which implies that $D \in \Dg^{(j)}(\B)$ and $\Dg^{(j)}(\A) \subseteq \Dg^{(j)}(\B)$.

By the above result, if $\sp(\A)=\sp(\B)$, obviously $\Dg(\A)=\Dg(\B)$ and  $\Dg^{(0)}(\A)=\Dg^{(0)}(\B)$.
If $\A,\B$ are further nonnegative and weakly irreducible, then
$\A$ is spectral $\ell$-symmetric if and only if $\B$ is  spectral $\ell$-symmetric by Theorem \ref{PF2}(2), implying that $c(\A)=c(B)$.
\end{proof}

\subsection{Incidence matrix of combinatorial symmetric tensors}

Let $\A$ be a combinatorial symmetric tensor of order $m$ and dimension $n$.
Set $$E(\A)=\{(i_1, i_2, \cdots, i_m)\in [n]^m: a_{i_1i_2\cdots i_m}\neq 0, 1\le i_1\le \cdots \le i_m \le n\}.$$
We define
\[b_{e,j}=|\{k: i_k=j, e=(i_1, i_2, \cdots, i_m) \in E(\A), k \in [m]\}|\]
and obtain an $|E(\A)|\times n$ matrix $B_\A=(b_{e,j})$, which is called the \emph{incidence matrix} of $\A$.
Observe that $b_{ij}\in \{0, 1, \cdots, m-1, m\}$, and the incidence matrix $B_\A$
can be viewed as one over $\Z_m$, where $\Z_m$ is the ring of integers modulo $m$.

\subsection{Composition length of modules}

Let $R$ be a ring with identity and $M$ a nonzero module over $R$. Recall that a finite chain of $l+1$ submodules of $M$
\[M=M_0>M_1>\cdots>M_{l-1}>M_l=0\]
is called a \emph{composition series} of length $l$ for $M$ provided that $M_{i-1}/M_i$ is simple for each $i \in [l]$.
By the Jordan-H\"older Theorem, if a module $M$ has a finite composition series,
then every pair of composition series for $M$ are equivalent. To be precise, if
\[M=N_0>N_1>\cdots>N_{k-1}>N_k=0\]
is another composition series for $M$, then $k=l$ and there is a permutation $\sigma \in S_l$ such that
\[M_i/M_{i+1}\cong N_{\sigma(i)}/N_{\sigma(i)+1}, i \in [l].\]
The length of the composition series for such a module $M$ is called the \emph{composition length} of $M$,
denoted by $\cl(M)$; see \cite[Section 11]{AF} for more details.

Next we focus on the modules over $\Z_m$. Let $M$ be a finitely generated module over $\Z_m$.
Clearly, $M$ is a $\Z$-module (i.e. an abelian group) and $mx=0$ for any $x\in M$.
Suppose that $M$ is isomorphic to
\[\Z_{p_1^{k_1}}\oplus \Z_{p_2^{k_2}}\oplus \cdots \oplus \Z_{p_s^{k_s}},\]
as a $\Z$-module, where $p_i$ is a prime for $i \in [s]$.
Since $mx=0$ for any $x\in M$, we have $p_i^{k_i}\mid m$ for all $i \in [s]$.
In this situation, $\cl(M)=\sum_{i=1}^s k_i$.
In particular, if $m$ is prime, then $\Z_m$ is a field, and $M$ is a linear space over $\Z_m$ whose dimension is exactly the composition length of $M$.
If $m=p_1^{k_1}p_2^{k_2}\cdots p_s^{k_s}$ with $p_i$'s being prime and distinct,
then as a regular module over $\Z_m$, $\cl(\Z_m)=\sum_{i=1}^s k_i=:\cl(m)$.
Also, if $d|m$, then $\Z_d$ is a submodule of the above regular module, and $\cl(\Z_d)=:\cl(d)$.

\subsection{Hypergraphs}

A \emph{hypergraph} $G=(V(G),E(G))$ consists of a vertex set  $V(G)=\{v_1, v_2, \ldots, v_n\}$
and an edge set $E(G)=\{e_1, e_2, \ldots, e_{l}\}$ where $e_{j}\subseteq V(G)$ for $j \in [l]$.
If $|e_{j}|=m$ for each $j\in [l]$, then $G$ is called an {\it $m$-uniform} hypergraph.
In particular, the $2$-uniform hypergraphs are exactly the classical simple graphs.

Now suppose $G$ is $m$-uniform.
The {\it adjacency tensor} $\A(G)$ of the hypergraph $G$ is defined as an $m$-th order $n$-dimensional tensor $\mathcal{A}(G)=(a_{i_{1}i_{2}\ldots i_{m}})$,
where
\[a_{i_{1}i_{2}\ldots i_{m}}=
\begin{cases}
\frac{1}{(m-1)!}, &  \mbox{if~} \{v_{i_{1}},v_{i_{2}},\ldots,v_{i_{m}}\} \in E(G);\\
  0, & \mbox{otherwise}.
\end{cases}
\]
Observe that the adjacency tensor $\A(G)$ of a hypergraph $G$ is nonnegative and symmetric, and it is weakly irreducible if and only if $G$ is connected \cite{PF, YY3}.
The \emph{incidence matrix} of $G$, denoted by $B_G=(b_{e,v})$, coincides with that of $\A(G)$, that is
\begin{align*}
b_{e,v}=\begin{cases}
1, & \mbox{if~} \ v\in e,\\
0, & \textrm{otherwise}.
\end{cases}
\end{align*}
For more knowledge on hypergraphs and spectral hypergraph theory, one can refer to \cite{Ber} and \cite{QiLuo}.

\section{Eigenvariety associated with eigenvalues with modulus equal to spectral radius}
Let $\A$ be a nonnegative weakly irreducible tensor, which is spectral $\ell$-symmetric.
Let $\V_j=\V_j(\A)$ (respectively, $\PV_j=\PV_j(\A)$) be the eigenvariety (respectively, projective eigenvariety) of $\A$ associated with the eigenvalue $\la_j=\rho(\A)e^{\i \frac{2\pi j}{\ell}}$ for $j=0,1,\ldots,\ell-1$.
Set $\V=\cup_{j=0}^{\ell-1} \V_j$ and $\PV=\cup_{j=0}^{\ell-1} \PV_j$.

For each $\y \in \V$, by Lemma \ref{ev}, $y_i\neq 0$ for all $i\in [n]$.
 Set $\bar\y=y_1^{-1}\y \in \PV$.
Therefore, we can assume each point $\y \in \PV$ holds $y_1=1$.
Also by Lemma \ref{ev}, for each $\y \in \PV_j$, $|\y|=\v_p$ and $\y=D_{\y}\v_p$, where $\v_p$ is the unique Perron vector in $\PV_0$, and $D_\y$ is defined as in (\ref{Dy});
and by Theorem \ref{PF2} (2),
\begin{equation}\label{dig-sim}\A=e^{-\i \frac{2 \pi j}{\ell}} D_\y^{-(m-1)}\A D_\y.\end{equation}

We define a quasi-Hadamard product $\circ$ in $\PV$ as follows:
 \begin{equation}\label{product} \y \circ \hat{\y}:=D_\y D_{\hat{\y}} \v_p.\end{equation}

\begin{lem} \label{VtoD}
The $(\PV,\circ)$ is an abelian group which contains $\PV_0$ as a subgroup and $\PV_j$ as a coset of $\PV_0$ for each $j \in [\ell-1]$,
and there exists a group isomorphism $\Psi: \PV \to \Dg$ which sends $\PV_j$ to $\Dg^{(j)}$ for $j=0,1,\ldots,\ell-1$.
\end{lem}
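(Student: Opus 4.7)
The plan is to take $\Psi \colon \PV \to \Dg$ defined by $\y \mapsto D_\y$ as the candidate isomorphism, and to verify that $\Psi$ converts the quasi-Hadamard product $\circ$ into ordinary matrix multiplication. By Lemma \ref{group}(2) we already know $\Psi$ restricts to a bijection $\PV_j \to \Dg^{(j)}$ for each $j$, so $\Psi$ is a bijection of sets. Hence it suffices to (i) show that $\circ$ is a well-defined binary operation on $\PV$, and (ii) verify the homomorphism identity $\Psi(\y \circ \hat{\y}) = \Psi(\y)\Psi(\hat{\y})$; the group axioms on $\PV$ then transfer automatically from $\Dg$, which is known to be an abelian group by Lemma \ref{group}(1).

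For well-definedness, I would observe that if $\y \in \PV_{j_1}$ and $\hat{\y} \in \PV_{j_2}$, then $D_\y \in \Dg^{(j_1)}$ and $D_{\hat{\y}} \in \Dg^{(j_2)}$ by Lemma \ref{group}(2); hence $D_\y D_{\hat{\y}} \in \Dg^{(j_1+j_2)}$ (indices mod $\ell$) by Lemma \ref{group}(1). The proof of Lemma \ref{group}(2) showed that for any $D \in \Dg^{(j)}$, the vector $D \v_p$ lies in $\PV_{\la_j}$; applied here this gives $\y \circ \hat{\y} = D_\y D_{\hat{\y}} \v_p \in \PV_{j_1+j_2}$. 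This already simultaneously establishes that $\PV_0$ is closed under $\circ$ (hence a subgroup once the group axioms are checked) and that $\PV_{j_1} \circ \PV_{j_2} \subseteq \PV_{j_1+j_2}$, so each $\PV_j$ is a coset of $\PV_0$.

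For the homomorphism identity, the key observation is that the entries of $D_\y$ and $D_{\hat{\y}}$ all lie on the unit circle, so $D_\y D_{\hat{\y}}$ is again a diagonal matrix with unit-modulus entries. Consequently $|D_\y D_{\hat{\y}} \v_p| = \v_p$, so by the very definition (\ref{Dy}) a componentwise computation gives
$$D_{\y \circ \hat{\y}} = \diag\!\left(\frac{(D_\y D_{\hat{\y}} \v_p)_i}{(\v_p)_i}\right)_{\!i \in [n]} = D_\y D_{\hat{\y}},$$
which yields $\Psi(\y \circ \hat{\y}) = \Psi(\y)\Psi(\hat{\y})$. The identity of $(\PV,\circ)$ then corresponds under $\Psi$ to the identity matrix, i.e.\ to $\v_p$ itself, and the inverse of $\y$ corresponds to $D_\y^{-1} \v_p$, which is nothing but the componentwise complex conjugate $\bar{\y}$.

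I do not anticipate a genuine obstacle; the only point requiring care is the bookkeeping of the normalization $y_1 = 1$, which is needed so that the image of $\Psi$ really lies in the distinguished copy of $\Dg^{(j)}$ described by (\ref{2nd}). This is automatic: when $y_1 = \hat{y}_1 = 1$ (so that in particular $(\v_p)_1 = 1$), the first entry of $D_\y D_{\hat{\y}} \v_p$ is $\frac{y_1}{|y_1|}\cdot\frac{\hat{y}_1}{|\hat{y}_1|}\cdot (\v_p)_1 = 1$, so $\y \circ \hat{\y}$ satisfies the same normalization. The whole argument is thus essentially a transport-of-structure via $\Psi$, with Lemma \ref{group} doing the heavy lifting.
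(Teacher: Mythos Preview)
Your proposal is correct and follows essentially the same approach as the paper: both define $\Psi(\y)=D_\y$, invoke Lemma~\ref{group}(2) for bijectivity, and use the closure $D_\y D_{\hat{\y}}\in\Dg^{(j_1+j_2)}$ together with $D\v_p\in\PV_j$ to establish well-definedness and the homomorphism property. Your version is in fact slightly more explicit than the paper's (which just writes ``It is easy to see that $\Psi(\y\circ\hat{\y})=\Psi(\y)\Psi(\hat{\y})$''), and your transport-of-structure framing and your check that the normalization $y_1=1$ is preserved are welcome touches.
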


\begin{proof}
Suppose $\y \in \PV_i$, $\hat{\y} \in \PV_j$.
Then by (\ref{dig-sim}), $$\A= e^{-\i \frac{2 \pi (i+j)}{\ell}} (D_\y D_{\hat{\y}})^{-(m-1)}\A (D_\y D_{\hat{\y}}).$$
So $ D_\y D_{\hat{\y}} \in \Dg^{(i+j)}$, and hence $\y \circ \hat{\y} \in \PV_{i+j}$ by (\ref{2nd}) and (\ref{product}), where $i+j$ is taken modulo $m$.
In addition, by Lemma \ref{group}(2), $D_\y^{-1} \in \Dg^{(-i)}$, hence $\y^{\circ (-1)}=D_\y^{-1}\v_p \in \PV_{-i}$, where $(-i)$ is also taken modulo $m$.
It is easily seen that $\v_p$ is the zero in $\PV$.
So $(\PV,\circ)$ is an abelian group. From the above discussion one can see that $\PV_0$ is a subgroup of $\PV$.

Define
\begin{equation}\label{psi}
\Psi: \PV \to \Dg \;(\y \mapsto D_\y).
\end{equation}
By Lemma \ref{group}(2) and (\ref{2nd}), $\Psi$ is a bijection.
It is easy to see that \[\Psi(\y \circ \hat{\y})=\Psi(\y) \Psi(\hat{\y}).\]
So $\Psi$ is a group isomorphism and sends $\PV_j$ to $\Dg^{(j)}$ by (\ref{2nd}),
  which implies that $\PV_j$ is a coset of $\PV_0$
  as $\Dg^{(j)}$ is a coset of $\Dg^{(0)}$ by Lemma \ref{group} for each $j \in [\ell-1]$.
\end{proof}

Now we further assume $\A$ is also combinatorial symmetric.
So, for each $\D \in \Dg$, $D^m=\I$ by Lemma \ref{group}(3).
It follows that there exists a unique $\phi_i\in \{0, 1, \cdots, m-1\}$ for each $i\in [n]$
   such that $D=\diag\{e^{\i \frac{2\pi}{m} \phi_1}, \ldots, e^{\i \frac{2\pi}{m} \phi_n}\}$.
Set
\begin{equation}\label{phiD}
\phi_D=(\phi_1,\ldots,\phi_n) \in \mathbb{Z}_m^n.
\end{equation}
Therefore, we have an injective map:
\begin{equation} \label{Phi}
\Phi: \Dg \to \mathbb{Z}_m^n \;(D \mapsto \phi_D).
\end{equation}
Furthermore, $\Phi(D\hat{D})=\Phi(D)+\Phi(\hat{D})$.
Since $\y^{\circ m}:=(D_\y)^m\v_p=\v_p$, $\PV$, $\Dg$, and $\mathbb{Z}_m^n$ all admit $\Z_m$-modules,
and $\Psi$ is a $\Z_m$-isomorphism from $(\PV,\circ)$ to $(\Dg, \cdot)$, and $\Phi$ is a $\Z_m$-monomorphism from $(\Dg, \cdot)$ to $(\Z_m^n, +)$.

In the following, we will determine the image $\Phi(\Dg)$.
Denote $\mathbbm{1}=(1, 1,\ldots, 1)\in \Z_m^n$ the all-one vector.

\begin{lem} \label{VinS}
Let $\A$ be a nonnegative combinatorial symmetric weakly irreducible tensor, which is spectral $\ell$-symmetric.
If $\y \in \V_j$, then $B_\A \Phi\Psi(\y)=\frac{m j}{\ell}\mathbbm{1}$ over $\Z_m$ for each $j=0, 1, \cdots, \ell-1$, where $B_\A$ is the incidence matrix of $\A$.
\end{lem}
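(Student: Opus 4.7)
The plan is to exploit the characterization already derived inside the proof of Lemma~\ref{group}(3): the defining identity $\A=e^{-\i\frac{2\pi j}{\ell}}D^{-(m-1)}\A D$ for $D\in\Dg^{(j)}$, when read entrywise, gives a family of congruences on the phase exponents, and these congruences are precisely the rows of $B_\A\,\phi_D$.

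First, I would reduce to the projective picture. Given $\y\in\V_j$, Lemma~\ref{ev} guarantees all $y_i\ne 0$; rescaling so that $y_1=1$ I obtain a representative in $\PV_j$ without changing $D_\y$. By Theorem~\ref{PF2}(2), the matrix $\Psi(\y)=D_\y$ lies in $\Dg^{(j)}$, and by Lemma~\ref{group}(3) it satisfies $D_\y^m=\I$. Hence I may write $D_\y=\diag(e^{\i 2\pi\phi_1/m},\ldots,e^{\i 2\pi\phi_n/m})$ with $\phi_k\in\{0,1,\ldots,m-1\}$, and by definition $\Phi\Psi(\y)=\phi_{D_\y}=(\phi_1,\ldots,\phi_n)$.

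Second, I would translate the tensor identity into a scalar congruence on each nonzero entry. Writing $d_{ii}=e^{\i\theta_i}$ with $\theta_i=\frac{2\pi}{m}\phi_i$ and comparing the $(i_1,\ldots,i_m)$-entries of $\A$ and $e^{-\i\frac{2\pi j}{\ell}}D_\y^{-(m-1)}\A D_\y$, one gets for every $(i_1,\ldots,i_m)$ with $a_{i_1\cdots i_m}\ne 0$
\begin{equation*}
\frac{2\pi j}{\ell}+m\theta_{i_1}\equiv \theta_{i_1}+\cdots+\theta_{i_m}\pmod{2\pi},
\end{equation*}
which is precisely equation (\ref{ell-j-eq}) from the proof of Lemma~\ref{group}(3). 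Substituting $\theta_i=\frac{2\pi}{m}\phi_i$ and noting $m\theta_{i_1}=2\pi\phi_{i_1}\equiv 0\pmod{2\pi}$, this collapses to
\begin{equation*}
\phi_{i_1}+\phi_{i_2}+\cdots+\phi_{i_m}\equiv \frac{mj}{\ell}\pmod{m},
\end{equation*}
where $\frac{mj}{\ell}$ is an integer because $\ell\mid m$.

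Third, I would recognize the left-hand side as an entry of $B_\A\phi_{D_\y}$. By definition, for each $e=(i_1,\ldots,i_m)\in E(\A)$ the $(e,k)$-entry of $B_\A$ is the multiplicity of $k$ among the $i_r$, so $(B_\A\phi_{D_\y})_e=\sum_{r=1}^m\phi_{i_r}$, and the congruence above says this equals $\frac{mj}{\ell}$ in $\Z_m$. Running $e$ over all of $E(\A)$ yields $B_\A\Phi\Psi(\y)=\frac{mj}{\ell}\mathbbm{1}$ in $\Z_m^{E(\A)}$, as required.

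There is no real obstacle here; the only point requiring attention is the bookkeeping between multiplicative phases in $\C^\times$ and additive residues in $\Z_m$, together with invoking $\ell\mid m$ to make the right-hand side an honest element of $\Z_m$. Combinatorial symmetry of $\A$ is used implicitly through Lemma~\ref{group}(3) to justify $D^m=\I$, and also to reindex edges by ordered tuples $i_1\le\cdots\le i_m$ without loss of information.
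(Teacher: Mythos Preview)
Your proposal is correct and follows essentially the same route as the paper's own proof: read the identity $\A=e^{-\i\frac{2\pi j}{\ell}}D_\y^{-(m-1)}\A D_\y$ entrywise to obtain $\phi_{i_1}+\cdots+\phi_{i_m}\equiv \frac{mj}{\ell}\pmod m$ for every nonzero entry, then recognize this as the row of $B_\A\phi_{D_\y}$ indexed by $e$. Your version is, if anything, slightly more explicit than the paper's about the normalization $y_1=1$ (needed so that $D_\y\in\Dg$ and $\Phi$ is applicable) and about why $\frac{mj}{\ell}$ lies in $\Z_m$.
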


\begin{proof}
Let $\y=(y_1, \cdots, y_n)$ be an eigenvector in $\V_j$. Then by Theorem \ref{PF2}, we have
\[\A=e^{-\i\frac{2\pi}{\ell}j}D_\y^{-(m-1)}\A D_\y\]
where $D_\y=\diag(e^{\i \frac{2\pi}{m} \phi_1},\cdots, e^{\i \frac{2\pi}{m} \phi_n})$
   and $\frac{y_i}{|y_i|}=e^{\i \frac{2\pi}{m} \phi_i}$ for $i \in [n]$.
To be precise, for any $i_1, \cdots, i_m\in [n]$,
\[
a_{i_1i_2\cdots i_m}=e^{-\i\frac{2\pi}{\ell}j}e^{\i\frac{2\pi}{m}(-(m-1)\phi_{i_1})}a_{i_1i_2\cdots i_m}e^{\i\frac{2\pi}{m}(\phi_{i_2}+\cdots+\phi_{i_m})}
\]
Therefore, if $a_{i_1i_2\cdots i_m}\neq 0$, then
\begin{align*}
\phi_{i_1}+\cdots+\phi_{i_m}\equiv \frac{m}{\ell}j \mod m.
\end{align*}
It follows that $\sum_{k=1}^n b_{e,k}\phi_k \equiv \frac{m}{\ell}j \mod m$
  and $B_\A\Phi\Psi(\y)=\frac{m j}{\ell}\mathbbm{1}$ over $\Z_m$.
\end{proof}

Denote by
\begin{equation}\label{equation} \S_j=\S_j(\A)=\{\x \in \Z_m^n: B_\A\x=\frac{mj}{\ell}\mathbbm{1} \hbox{~over~} \Z_m\}, j=0,1,\ldots,\ell-1.\end{equation}
Observe that $\Phi\Psi(\V_j) \subseteq \S_j$ by Lemma \ref{VinS} for $j=0,1,\ldots,\ell-1$.
Furthermore, $\S_0$ is a $\Z_m$-submodule of $\Z_m^n$,
  and $\S_j=\y^{(j)}+\S_0=\{\y^{(j)}+\y: \y\in \S_0\}$ for $j \in [\ell-1]$, where $\y^{(j)} \in \S_j$.

Observe that $\mathbbm{1} \in \S_0$ since the sum of each row of $B_\A$ is equal to $m$.
If $\phi \in \S_j$, so is $\phi+\alpha \mathbbm{1}$ for any $\alpha \in \Z_m$.
So we only consider
\[\PS_j=\PS_j(\A)=\{\phi \in \S_j: \phi_1=0\}, j=0,1,\ldots,\ell-1.\]
Observe that $\Phi\Psi(\PV_j) \subseteq \PS_j$ as $y_1=1$ implies that $(\Phi\Psi(\y))_1=0$ for any $\y \in \PV_j$   by the definitions (\ref{Dy}) and (\ref{phiD}).
Now $\PS_0$ is a $\Z_m$-submodule of $\S_0$;
   in fact, it is isomorphic to the quotient module $\S_0/(\Z_m\mathbbm{1})$.
We also have $\PS_j=\y^{(j)}+\PS_0$ for $j \in [\ell-1]$, where $\y^{(j)} \in \PS_j$.

In the following, we will show that $\Phi(\Dg^{(j)})=\PS_j$ for each $j=0,1,\ldots,\ell-1$,
and hence $\Phi(\Dg)=\PS:=\cup_{j=1}^{\ell-1}\PS_j$.

\begin{lem} \label{DtoS}
Let $\A$ be a nonnegative combinatorial symmetric weakly irreducible tensor which is spectral $\ell$-symmetric.
Then $\Phi$ is a $\Z_m$-isomorphism from $\Dg$ to $\PS$ which sends $\Dg^{(j)}$ to $\PS_j$ for each $j=0,1,\ldots,\ell-1$.
\end{lem}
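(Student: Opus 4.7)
The plan is to combine three facts: (i) $\Phi$ is already known to be a $\Z_m$-monomorphism from $\Dg$ to $\Z_m^n$ from the discussion preceding Lemma \ref{VinS}; (ii) the containment $\Phi(\Dg^{(j)})\subseteq \PS_j$ follows almost immediately from Lemma \ref{VinS} and Lemma \ref{group}(2); (iii) the reverse containment $\PS_j\subseteq \Phi(\Dg^{(j)})$ requires a direct construction that exploits the combinatorial symmetry of $\A$.

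First I would record the easy inclusion. Take $D\in \Dg^{(j)}$ and let $\y \in \PV_{\la_j}$ with $y_1=1$ be its preimage under $\Psi$ guaranteed by Lemma \ref{group}(2), so that $\Psi(\y)=D$ and hence $\Phi(D)=\Phi\Psi(\y)$. By Lemma \ref{VinS}, $B_\A \Phi\Psi(\y)=\tfrac{mj}{\ell}\mathbbm{1}$ over $\Z_m$; the normalization $y_1=1$ forces the first coordinate of $\Phi\Psi(\y)$ to be $0$ via (\ref{Dy}) and (\ref{phiD}). Hence $\Phi(D)\in \PS_j$. Note that $\tfrac{mj}{\ell}$ is a legitimate element of $\Z_m$ because $\ell\mid m$ by Lemma \ref{group}(3).

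The substantive step is the opposite inclusion. Given $\phi=(\phi_1,\ldots,\phi_n)\in \PS_j$, I form the diagonal matrix $D=\diag(e^{\i 2\pi \phi_1/m},\ldots,e^{\i 2\pi \phi_n/m})$, which satisfies $d_{11}=1$ since $\phi_1=0$, and I claim $D\in \Dg^{(j)}$. To verify this, I need $a_{i_1\cdots i_m}=e^{-\i 2\pi j/\ell}d_{i_1}^{-(m-1)}a_{i_1\cdots i_m}d_{i_2}\cdots d_{i_m}$ for all index tuples. When $a_{i_1\cdots i_m}=0$ both sides vanish; when $a_{i_1\cdots i_m}\neq 0$, cancelling the entry reduces the identity (modulo $m$) to
\[
\phi_{i_1}+\phi_{i_2}+\cdots+\phi_{i_m}\equiv \tfrac{mj}{\ell} \pmod m,
\]
using that $m\phi_{i_1}\equiv 0\pmod m$. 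By the combinatorial symmetry of $\A$, any index tuple with nonzero entry is a permutation of some canonical $e=(i_1\le\cdots\le i_m)\in E(\A)$, and the corresponding row of $B_\A$ records exactly the multiset of those indices. So the required congruence is precisely the $e$-th coordinate of the defining equation $B_\A\phi=\tfrac{mj}{\ell}\mathbbm{1}$ of $\S_j$. Therefore $D\in \Dg^{(j)}$ and $\Phi(D)=\phi$.

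Putting the two inclusions together yields $\Phi(\Dg^{(j)})=\PS_j$ for every $j$, hence $\Phi(\Dg)=\PS$. Since $\Phi$ is already a $\Z_m$-module homomorphism and is injective (both observed in the paragraph introducing (\ref{Phi})), it is a $\Z_m$-isomorphism $\Dg\to \PS$ taking cosets to cosets. The only delicate point in the whole argument is making sure the modular arithmetic is honest: one must use $\ell\mid m$ to interpret $\tfrac{mj}{\ell}$ in $\Z_m$, and use $D^m=\I$ (equivalently $m\phi_i\equiv 0\pmod m$) to rewrite $-(m-1)\phi_{i_1}$ as $\phi_{i_1}$ in the exponent identity; both are given by Lemma \ref{group}(3).
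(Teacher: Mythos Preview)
Your proposal is correct and follows essentially the same route as the paper: the paper invokes Lemma \ref{VinS} for the inclusion $\Phi(\Dg^{(j)})\subseteq \PS_j$, and for the reverse inclusion it constructs from each $\phi\in\PS_j$ the diagonal matrix $D_\phi=\diag(e^{\i 2\pi\phi_1/m},\ldots,e^{\i 2\pi\phi_n/m})$ and checks $\A=e^{-\i 2\pi j/\ell}D_\phi^{-(m-1)}\A D_\phi$ entrywise, exactly as you do. Your write-up is in fact a bit more explicit about the modular bookkeeping (the use of $\ell\mid m$ and $m\phi_{i_1}\equiv 0$) than the paper's terse proof, but the argument is the same.
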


\begin{proof}
By Lemma \ref{VinS}, $\Phi\Psi(\PV_j) =\Phi(\Dg^{(j)}) \subseteq \PS_j$.
For each $\phi=(\phi_1, \phi_2, \cdots, \phi_n)\in \PS_j$, as $B_\A \phi=\frac{mj}{\ell}\mathbbm{1}$,
if $a_{i_1 i_2 \cdots i_m} \ne 0$, then
\begin{equation}\label{the sum of phi}
\phi_{i_1}+\cdots+\phi_{i_m}\equiv \frac{mj}{\ell} \mod m.
\end{equation}
Let $D_\phi=\diag(e^{\i \frac{2\pi}{m} \phi_1}, \cdots, e^{\i \frac{2\pi}{m} \phi_n})$.
  Then
  $$\A=e^{-\i \frac{2 \pi j}{\ell}} D_\phi^{-(m-1)} \A D_\phi.$$
As $\phi_1=0$,  $(D_\phi)_{11}=1$, implying $D_\phi \in \Dg^{(j)}$.
\end{proof}

By Lemmas \ref{VtoD} and \ref{DtoS}, we get the main result of this paper.

\begin{thm}\label{VtoS}
Let $\A$ be a nonnegative combinatorial symmetric weakly irreducible tensor which is spectral $\ell$-symmetric.
Then $\PV_0(\A)$ is $\Z_m$-module isomorphic to $\PS_0(\A)$.
In particular, if $m$ is a prime, then $\PV_0(\A)$ is a linear space over $\Z_m$,
 and $\PV_j(\A)$ is an affine space over $\Z_m$ for $j \in [\ell-1]$.
\end{thm}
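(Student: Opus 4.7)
The plan is to simply compose the two $\Z_m$-homomorphisms that have already been built and observe that the resulting map is an isomorphism of modules which restricts compatibly to the zeroth components. Concretely, Lemma \ref{VtoD} furnishes a bijection $\Psi:\PV\to\Dg$ that sends $\PV_j$ onto $\Dg^{(j)}$ and satisfies $\Psi(\y\circ\hat{\y})=\Psi(\y)\Psi(\hat{\y})$; since $\y^{\circ m}=\v_p$ under the quasi-Hadamard product, $\Psi$ is in particular a $\Z_m$-module isomorphism between $(\PV,\circ)$ and $(\Dg,\cdot)$. Lemma \ref{DtoS} then provides the $\Z_m$-isomorphism $\Phi:\Dg\to\PS$ sending $\Dg^{(j)}$ onto $\PS_j$. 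Composing, the map $\Phi\circ\Psi:\PV\to\PS$ is a $\Z_m$-module isomorphism that sends $\PV_j$ bijectively onto $\PS_j$ for every $j=0,1,\ldots,\ell-1$.

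Restricting to $j=0$ then yields the desired $\Z_m$-module isomorphism $\PV_0(\A)\cong\PS_0(\A)$. The only thing to verify here is that the zero elements are matched: the identity of $(\PV,\circ)$ is the Perron vector $\v_p$ (normalized so that $(\v_p)_1=1$), hence $\Psi(\v_p)=D_{\v_p}=\I$ and $\Phi(\I)=\mathbf{0}\in\PS_0$, so the submodule $\PV_0$ is indeed sent onto the submodule $\PS_0$.

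For the ``in particular'' clause, suppose $m$ is prime. Then $\Z_m$ is a field, so a $\Z_m$-module is nothing but a $\Z_m$-vector space. Since $\PS_0\subseteq\Z_m^n$ is a $\Z_m$-submodule by construction, it is a linear subspace of $\Z_m^n$; transporting the structure along $\Phi\circ\Psi$, $\PV_0(\A)$ becomes a linear space over $\Z_m$. For $j\in[\ell-1]$, recall from the discussion preceding Lemma \ref{DtoS} that $\PS_j=\y^{(j)}+\PS_0$ for any chosen $\y^{(j)}\in\PS_j$, so $\PS_j$ is a coset of the linear subspace $\PS_0$, i.e.\ an affine space over $\Z_m$; the bijection $\Phi\circ\Psi$ then makes $\PV_j(\A)$ into an affine space over $\Z_m$ as well.

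There is essentially no serious obstacle: the preparatory Lemmas \ref{VtoD} and \ref{DtoS} do all the work, and the remaining content is the observation that when the base ring happens to be a field the modules are vector spaces and the cosets are affine spaces. The one place to be slightly careful is to confirm, as above, that the identity element of the quasi-Hadamard group $(\PV,\circ)$ is exactly $\v_p$, so that ``$\PV_0$'' in the group-theoretic sense agrees with ``$\PV_{\rho(\A)}$ with the Perron vector serving as the zero'', and correspondingly that its image under $\Phi\circ\Psi$ is precisely $\PS_0$ and not merely some coset of it.
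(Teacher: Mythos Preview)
Your proposal is correct and follows exactly the paper's approach: the paper's proof consists solely of the sentence ``By Lemmas \ref{VtoD} and \ref{DtoS}, we get the main result of this paper,'' and you have simply unpacked that sentence, composing $\Phi\circ\Psi$ and spelling out the ``in particular'' clause. Your additional remarks (matching zero elements, the affine-space structure on $\PV_j$) are accurate elaborations that the paper leaves implicit.
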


Theorem \ref{VtoS} provides an algebraic structure on the projective eigenvariety $\PV_0(\A)$
  associated with $\rho(\A)$, which can be obtained explicitly by solving the equation $B_\A \x=0$ over $\Z_m$.
As a byproduct, since $\Dg^{(0)}$ is $\Z_m$-isomorphic to $\PS_0$,
 the structure of $\Dg^{(0)}$ can also be obtained explicitly, which cannot be discussed in \cite{FHB}.

\begin{Def}\label{sg}
Let $\A$ be a nonnegative combinatorial symmetric weakly irreducible tensor of order $m$.
Then the composition length of the $\Z_m$-module $\PV_{\rho(\A)}$  is called
the \emph{stabilizing dimension} of $\A$, denoted by $\gamma(\A)$.

In particular, if $m$ is a prime, then $\Z_m$ is a field, and $\gamma(\A)$ is exactly the dimension of the $\Z_m$-linear space $\PV_{\rho(\A)}$.
\end{Def}

We now determine the structure of $\PS_0$.
Similar to the Smith normal form for a matrix over a principle ideal domain, one can also deduce a diagonal form for a matrix over $\Z_m$ by some elementary row transformations and column transformations, that is, for any matrix $B \in \Z_m^{k \times n}$,
there exist two invertible matrices $P \in \Z_m^{k \times k}$ and $Q \in \Z_m^{n \times n}$ such that
\begin{equation} \label{smith}
PBQ=\begin{pmatrix}
d_1 & 0 & 0 &  & \cdots & & 0\\
0 & d_2 & 0 &  & \cdots & &0\\
0 & 0 & \ddots &  &  & & 0\\
\vdots &  &  & d_r &  & & \vdots\\
 & & & & 0 & & \\
  & & & &  & \ddots & \\
0 &  &  & \cdots &  & &0
\end{pmatrix},
\end{equation}
where $r \ge 0$, $ 1 \le d_i \le m-1$, $d_i | d_{i+1}$ for $i=1,\ldots, r-1$, and $d_i |m$ for all $i=1,2,\ldots,r$.
The matrix in (\ref{smith}) is called the {\it Smith normal form} of $B$ over $\Z_m$,
$d_i$'s are the \emph{invariant divisors} of $B$ among of which the invariant divisors $1$ are called the \emph{unit invariant divisors} of $B$.

\begin{thm} \label{stru}
Let $\A$ be a nonnegative combinatorial symmetric weakly irreducible tensor of order $m$ and dimension $n$.
Suppose that $B_\A$ has a Smith normal form over $\Z_m$ as in (\ref{smith}).
Then $1 \le r \le n-1$, and
\[\PS_0(\A) \cong \oplus_{i, d_i \ne 1} \Z_{d_i} \oplus \underbrace{\Z_m \oplus \cdots \oplus \Z_m}_{n-1-r \text{\em \small~copies~}}\]
Consequently,
\[ s(\A)=m^{n-1-r} \Pi_{i=1}^r d_i, \;  \gamma(\A)=\sum_{i, d_i \ne 1} \cl(d_i)+ (n-1-r)\cl(m),\]
and $s(\A)|m^{n-1}$, $s(\A)< m^{n-1}$, $\gamma(\A) < (n-1)\cl(m)$.
\end{thm}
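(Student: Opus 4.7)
The plan is to describe $\PS_0(\A)$ explicitly by pushing $\ker B_\A$ through the Smith normal form decomposition, splitting off the $\Z_m$-summand generated by $\mathbbm{1}$, and then reading off the cardinality, composition length, and bounds directly from the invariant factors.

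First, I will write $P B_\A Q = D = \diag(d_1, \ldots, d_r, 0, \ldots, 0)$ and substitute $\y = Q^{-1} \x$. Then $B_\A \x = 0$ over $\Z_m$ is equivalent to $d_i y_i = 0$ for $i \le r$, which forces $y_i \in (m/d_i)\Z_m \cong \Z_{d_i}$; since $Q$ is invertible, this yields $\ker B_\A \cong \Z_{d_1} \oplus \cdots \oplus \Z_{d_r} \oplus \Z_m^{n-r}$ as $\Z_m$-modules. Each row of $B_\A$ has sum $m \equiv 0 \pmod m$, so $\mathbbm{1} \in \ker B_\A$ with order exactly $m$. Any $\phi \in \ker B_\A$ decomposes as $\phi = \phi_1 \mathbbm{1} + (\phi - \phi_1 \mathbbm{1})$, where the second summand still lies in $\ker B_\A$ and has first coordinate zero, hence lies in $\PS_0$; together with $\Z_m \mathbbm{1} \cap \PS_0 = 0$, this gives an internal direct sum $\ker B_\A = \Z_m \mathbbm{1} \oplus \PS_0 \cong \Z_m \oplus \PS_0$.

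Next I will sandwich $r$. Weak irreducibility of $\A$ produces a nonzero entry $a_{i_1 \cdots i_m}$ with not all indices equal, whose row in $B_\A$ contains an entry $b_{e,j} \in \{1,\ldots,m-1\}$, so $B_\A \not\equiv 0 \pmod m$ and $r \ge 1$. On the other hand, the convention $d_i < m$ forces every element of $\Z_{d_1} \oplus \cdots \oplus \Z_{d_r}$ to have order at most $d_r < m$, so the order-$m$ element $\mathbbm{1}$ requires a $\Z_m$ summand in the decomposition of $\ker B_\A$, giving $n - r \ge 1$. Equating the two descriptions of $\ker B_\A$ then yields $\Z_m \oplus \PS_0 \cong \Z_{d_1} \oplus \cdots \oplus \Z_{d_r} \oplus \Z_m^{n-r}$, and cancellation of a $\Z_m$ summand (justified by uniqueness of the invariant-factor decomposition for finite abelian groups) yields $\PS_0 \cong \bigoplus_{d_i \ne 1} \Z_{d_i} \oplus \Z_m^{n-1-r}$ after dropping the trivial $\Z_1$ factors.

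Finally, $s(\A) = |\PS_0| = m^{n-1-r}\prod_{i=1}^r d_i$ and $\gamma(\A) = \sum_{d_i \ne 1} \cl(d_i) + (n-1-r)\cl(m)$ follow by taking cardinality and using additivity of composition length over direct sums. For the inequalities, $d_i \mid m$ gives $s(\A) \mid m^{n-1}$; combining $d_i \le m-1$ with $r \ge 1$ gives $\prod d_i \le (m-1) m^{r-1} < m^r$, so $s(\A) < m^{n-1}$; and since each $d_i$ is a proper divisor of $m$, $\cl(d_i) \le \cl(m) - 1$, whence $\gamma(\A) \le r(\cl(m)-1) + (n-1-r)\cl(m) = (n-1)\cl(m) - r < (n-1)\cl(m)$. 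The only mildly delicate step I foresee is justifying the cancellation of a $\Z_m$ summand, which amounts to the uniqueness half of the fundamental theorem for finite abelian groups applied prime-by-prime.
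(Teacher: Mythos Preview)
Your proof is correct and follows essentially the same route as the paper's: compute $\S_0=\ker B_\A$ via the Smith normal form and then strip off one copy of $\Z_m$ corresponding to $\mathbbm{1}$. The only cosmetic difference is that the paper phrases the last step as a quotient $\PS_0\cong \S_0/(\Z_m\mathbbm{1})$ rather than as an internal direct sum followed by cancellation; your explicit appeal to the uniqueness of invariant factors for finite abelian groups actually makes this delicate point more transparent than the paper's treatment, and your arguments for $r\ge 1$, for the strict inequality $s(\A)<m^{n-1}$, and for $\gamma(\A)<(n-1)\cl(m)$ fill in details the paper leaves to the reader.
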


\begin{proof}
As $\A$ is irreducible, $B_\A \ne 0$, implying that $r \ge 1$.
Also, as $B_\A \mathbbm{1} =0$, adding all other columns to the last column of $B_\A$, the last column of the resulting matrix becomes a zero column,
   which keeps invariant under any elementary row transformations and column transformations.
So $PB_\A Q$ always has a zero column, implying that $r \le n-1$.

Let $\S'_0=\{\x: P B_\A Q \x=0 \hbox{~over~} \Z_m\}$.
Then $\x \in \S_0=\S_0(\A)$ if and only if $Q^{-1} \x \in \S'_0$, implying that $\S_0$ is $\Z_m$-module isomorphic to $\S'_0$.
It is easy to see
$$\S'_0 \cong \oplus_{i, d_i \ne 1} \Z_{d_i} \oplus \underbrace{\Z_m \oplus \cdots \oplus \Z_m}_{n-r \text{\small~copies~}}.$$
So $$\PS_0(\A) \cong \S_0/(\Z_m \mathbbm{1}) \cong \S'_0/(\Z_m (Q^{-1}\mathbbm{1})) \cong
\oplus_{i, d_i \ne 1} \Z_{d_i} \oplus \underbrace{\Z_m \oplus \cdots \oplus \Z_m}_{n-1-r \text{\small~copies~}}.$$
Since $\PV_0(\A)$ is $\Z_m$-module isomorphic to $\PS_0(\A)$ by Theorem \ref{VtoS},
 we get the remaining results by Lemma \ref{group}(2) and Definition \ref{sg}.
\end{proof}

\begin{cor} \label{stru-unit}
Let $\A$ be a nonnegative combinatorial symmetric weakly irreducible tensor of order $m$ and dimension $n$.
Suppose that $B_\A$ has $t$ unit invariant divisors.
Then $$s(\A)\le m^{n-1-t},\gamma(\A) \le (n-1-t)\cl(m),$$
both with equalities if and only if $B_\A$ has exactly $t$ invariant divisors all being $1$.

In particular, if $m$ is prime, then
$$s(\A)= m^{n-1-\rank B_\A},\gamma(\A) = n-1-\rank B_\A,$$
where $\rank B_\A$ is the rank of $B_\A$ over the field $\Z_m$.
\end{cor}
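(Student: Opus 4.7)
My plan is to derive this corollary directly from the structural formulas in Theorem \ref{stru}. Recall that $s(\A) = m^{n-1-r}\prod_{i=1}^r d_i$ and $\gamma(\A) = \sum_{d_i \ne 1}\cl(d_i) + (n-1-r)\cl(m)$, where $d_1,\ldots,d_r$ are the invariant divisors of $B_\A$; by hypothesis exactly $t$ of these equal $1$, leaving $r-t$ non-unit invariant divisors. The central observation is that each non-unit invariant divisor satisfies $d_i\mid m$ and $d_i\le m-1$, so both $d_i\le m$ and $\cl(d_i)\le\cl(m)$ hold.

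For the two upper bounds I would bound the non-unit product by $\prod_{d_i\ne 1} d_i\le m^{r-t}$, which, combined with the factor $m^{n-1-r}$, gives $s(\A)\le m^{n-1-t}$; likewise $\sum_{d_i\ne 1}\cl(d_i)\le(r-t)\cl(m)$, so adding $(n-1-r)\cl(m)$ yields $\gamma(\A)\le(n-1-t)\cl(m)$.

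The part requiring a little more care is the equality analysis. Here the key fact is that the Smith normal form convention forces $d_i\le m-1$, i.e. $d_i<m$ strictly, so the termwise bound $d_i\le m$ is strict for every non-unit $d_i$, and the bound $\cl(d_i)\le\cl(m)$ is likewise strict whenever $d_i$ is a proper divisor of $m$ (since comparing prime factorizations $d_i=\prod p_k^{j_k}$ with $m=\prod p_k^{k_l}$ and $d_i<m$ forces at least one exponent to drop, hence $\cl(d_i)<\cl(m)$). Both upper bounds are therefore attained only when there are no non-unit invariant divisors, that is, $r=t$ and every $d_i$ equals $1$; in that case direct substitution into the formulas of Theorem \ref{stru} immediately gives both equalities.

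For the prime case, if $m$ is prime then the only positive divisor of $m$ at most $m-1$ is $1$, so every invariant divisor of $B_\A$ equals $1$; hence $r=t$, and since $\Z_m$ is a field this common value coincides with $\rank_{\Z_m} B_\A$. Substituting $\cl(m)=1$ and $r=t=\rank B_\A$ into the formulas of Theorem \ref{stru} yields the exact equalities in the statement. The only mildly delicate point to guard against is the strictness claim $\cl(d_i)<\cl(m)$ for proper divisors, which is why I would spell out the prime-factorization comparison explicitly; everything else is bookkeeping on top of Theorem \ref{stru}.
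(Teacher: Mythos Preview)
Your proposal is correct and follows essentially the same approach as the paper: both derive the corollary directly from the formulas $s(\A)=m^{n-1-r}\prod d_i$ and $\gamma(\A)=\sum_{d_i\ne1}\cl(d_i)+(n-1-r)\cl(m)$ of Theorem~\ref{stru}. The paper's proof simply says the first part ``follows from Theorem~\ref{stru} immediately'' and then handles the prime case by noting all invariant divisors equal $1$ and $\cl(m)=1$; your write-up spells out the bounding and the equality analysis (via $d_i<m$ and $\cl(d_i)<\cl(m)$ for proper divisors) that the paper leaves implicit, but the route is the same.
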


\begin{proof}
The first part of the result follows from Theorem \ref{stru} immediately.
For the second part, as $m$ is prime, $\cl(m)=1$, the invariant divisors of $B_\A$ are all unit invariant divisors $1$, the number of which is exactly the rank of $B_\A$.
Note that $\PS_0(\A)$ is a $\Z_m$-linear space of dimension $\gamma(\A)$ in this case.
The result follows.
\end{proof}

If $m$ is prime, the bounds in Corollary \ref{stru-unit} hold as equalities.
The tightness of the bounds in Corollary \ref{stru-unit} for the general case will be further discussed in Section 4.

\begin{cor}\label{stru-1}
Let $\A$ be a nonnegative combinatorial symmetric weakly irreducible tensor of order $m$ and dimension $n$.
The the following conditions are equivalent.

\begin{enumerate}
\item $s(\A)=1$, i.e. $\A$ has only one eigenvector corresponding to $\rho(\A)$.

\item $\gamma(\A)=0$.

\item $\PS_0(\A)=\{0\}$.

\item $\S_0(\A)=\{\alpha \mathbbm{1}: \alpha \in \Z_m\}$.

\item $B_\A$ has exactly $n-1$ invariant divisors all being $1$.
\end{enumerate}
\end{cor}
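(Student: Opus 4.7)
The strategy is to route everything through the isomorphism of Theorem \ref{VtoS}, which identifies $\PV_{\rho(\A)} = \PV_0(\A)$ with $\PS_0(\A)$ as $\Z_m$-modules, and then through the explicit decomposition of $\PS_0(\A)$ provided by Theorem \ref{stru}. In this way each of the five conditions becomes a trivially equivalent statement about the same module.

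First I would handle (1)$\Leftrightarrow$(2)$\Leftrightarrow$(3) in one stroke. By Lemma \ref{group}(2), $s(\A) = |\Dg^{(0)}(\A)| = |\PV_0(\A)|$, and Theorem \ref{VtoS} identifies $\PV_0(\A)$ with $\PS_0(\A)$ as $\Z_m$-modules, so (1) is literally $|\PS_0(\A)|=1$, i.e.\ (3). Definition \ref{sg} defines $\gamma(\A)$ as the composition length of $\PV_0(\A)\cong \PS_0(\A)$, and a module has composition length $0$ if and only if it is the zero module, giving (2)$\Leftrightarrow$(3).

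Next I would dispose of (3)$\Leftrightarrow$(4). Recall from the text that $\mathbbm{1}\in\S_0(\A)$ and $\PS_0(\A)=\{\phi\in\S_0(\A):\phi_1=0\}$; moreover $\PS_0(\A)\cong \S_0(\A)/(\Z_m\mathbbm{1})$. For the forward direction, any $\phi\in\S_0(\A)$ yields $\phi-\phi_1\mathbbm{1}\in\PS_0(\A)$, which equals $0$ by (3), so $\phi=\phi_1\mathbbm{1}\in\Z_m\mathbbm{1}$; this gives (4). Conversely, if $\S_0(\A)=\Z_m\mathbbm{1}$ then requiring first entry $0$ forces the scalar to vanish, so $\PS_0(\A)=\{0\}$.

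Finally, (3)$\Leftrightarrow$(5) follows immediately from the decomposition
\[\PS_0(\A)\cong\bigoplus_{i:\, d_i\ne 1}\Z_{d_i}\oplus\underbrace{\Z_m\oplus\cdots\oplus\Z_m}_{n-1-r\text{ copies}}\]
supplied by Theorem \ref{stru}, because the right-hand side is trivial precisely when there are no summands at all: $n-1-r=0$ and every invariant divisor $d_i$ equals $1$, which together say that $B_\A$ has $r=n-1$ invariant divisors and all of them are $1$. There is no real obstacle here; the only substantive work was already done in assembling Theorems \ref{VtoS} and \ref{stru}, and this corollary is a direct reading of those results together with the definitions of $s(\A)$, $\gamma(\A)$, and $\PS_0(\A)$.
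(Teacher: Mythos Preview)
Your proposal is correct and matches the paper's intent: the paper states Corollary \ref{stru-1} without proof, treating it as an immediate consequence of Theorem \ref{VtoS}, Theorem \ref{stru}, and the definitions of $s(\A)$, $\gamma(\A)$, $\PS_0(\A)$, and $\S_0(\A)$. Your write-up simply makes those implicit deductions explicit, and each step is sound.
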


\begin{rmk}
Let $\A$ be a nonnegative combinatorial symmetric weakly irreducible tensor of order $m$ and $n$ dimension.
Then $s(\A)$ is independent of the diagonal entries of $\A$ as each nonzero diagonal entry make no contribution to $\PS_0(\A)$ by Theorem \ref{stru}.
This fact can also be seen from the definition of $\Dg^{(0)}(\A)$.
In particular $s(\A)=s(\I +\A)$.
\end{rmk}

\begin{exm}
Let $\A$ be a positive tensor of order $m$ and dimension $n$.
Then $s(\A)=1$.
Let $\x \in \Z_m^n$ be a solution of $B_\A \x=0$.
As $\A$ contains nonzero entries $a_{i,i+1,\ldots,i+1}$ for $i=1,\ldots, n-1$,
we have $x_i+(m-1)x_{i+1}=0$, and hence $x_1=\cdots=x_n$.
So the assertion follows by Corollary \ref{stru-1}.
\end{exm}

\begin{exm}\label{ex1}
Let $\A$ be a $12$th order $6$-dimensional combinatorial symmetric tensor with the incidence matrix
\[B_\A=\begin{pmatrix}
3 & 3 &3 & 1 & 1 & 1\\
1 & 3 &3 & 3 & 1 & 1\\
1 & 1 &3 & 3 & 3 & 1
\end{pmatrix}.\]
Then
\[\PS_0(\A)=\Z_m {\y_1}\oplus \Z_m {\y_2}\oplus \Z_m {\y_3}
\oplus \Z_m {\y_4},\]
where
\begin{align*}
\y_1=(0, 0, 6, 6, 6, 6)^T, &\  \y_2=(0, 6, 6, 6, 6, 0)^T, \\
\y_3=(0, 0, 1, 0, 0, 9)^T, &\  \y_4=(0, 1, 0, 0, 1, 8)^T.
\end{align*}
As a $\Z_{12}$-module, we have
$\PS_0(\A) \cong \Z_2\oplus \Z_2\oplus \Z_{12}\oplus \Z_{12}$, implying that $s(\A)=576$ and $\gamma(\A)=8$.

In fact, the Smith normal form of $B_\A$ is
\[\begin{pmatrix}
1 & 0 &0& 0 & 0 & 0\\
0 & 2 &0 & 0 & 0 & 0\\
0 & 0 &2 & 0 & 0 & 0
\end{pmatrix}.\]
So we easily get the above result.
\end{exm}

%
%

\begin{exm}\label{ex2}
Let $\A$ be a 3rd order $6$-dimensional tensor with $a_{i_1i_2i_3}=1$ for
   $\{i_1, i_2, i_3\}\in \{\{1, 2, 3\},\{2, 3, 4\}, \{3, 4, 5\}, \{4, 5, 6\},\{5, 6, 1\}, \{6, 1, 2\}\}$
  and $a_{i_1i_2i_3}=0$ otherwise.
The incidence matrix $B_\A$ has rank $4$ over $\Z_3$, so $\gamma(\A)=1$ and $s(\A)=3$ by Corollary \ref{stru-unit}.
$\PS_0(\A)$ has a $\Z_3$-basis $\y=(0, 1, 2, 0, 1, 2)^T$.
\end{exm}

By Lemma \ref{comb}, the group $\Dg^{(0)}(\A)$ is only determined by the support of $\A$.
Finally we investigate the stabilizing index and the stabilizing dimension of a subpattern of $\A$.


\begin{thm} \label{pattern}
Let $\A,\hat\A$ be nonnegative combinatorial symmetric weakly irreducible tensors of order $m$ and dimension $n$
  such that $\sp(\hat\A) \le \sp(\A)$.
Then $\PS_0(\A)$ is a $\Z_m$-submodule of $\PS_0(\hat\A)$.
Consequently, $s(\A) | s(\hat\A)$ and $\gamma(\A) \le \gamma(\hat\A)$.
\end{thm}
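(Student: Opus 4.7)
The plan is to reduce everything to the submodule inclusion $\PS_0(\A)\subseteq\PS_0(\hat\A)$ inside $\Z_m^n$, and then read off the two numerical consequences from standard facts about finite abelian groups/modules.

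First I would unpack what the hypothesis $\sp(\hat\A)\le \sp(\A)$ says at the level of incidence matrices. Since every nonzero entry of $\hat\A$ is also a nonzero entry of $\A$, we get the edge inclusion $E(\hat\A)\subseteq E(\A)$. Because $b_{e,j}$ depends only on how many times the index $j$ appears in the tuple $e$, not on the actual tensor values, each row of $B_{\hat\A}$ indexed by $e\in E(\hat\A)$ coincides with the corresponding row of $B_\A$. In other words, $B_{\hat\A}$ is obtained from $B_\A$ simply by deleting the rows corresponding to $E(\A)\setminus E(\hat\A)$. Consequently, any $\x\in\Z_m^n$ with $B_\A\x=0$ over $\Z_m$ automatically satisfies $B_{\hat\A}\x=0$ over $\Z_m$. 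Since the constraint $x_1=0$ is identical in both defining equations, this yields $\PS_0(\A)\subseteq \PS_0(\hat\A)$, and as both are $\Z_m$-submodules of $\Z_m^n$, the first assertion follows. (Alternatively, one may invoke Lemma \ref{comb}(1) to obtain $\Dg^{(0)}(\A)\subseteq \Dg^{(0)}(\hat\A)$ and then transport this inclusion across the $\Z_m$-isomorphism $\Phi$ of Lemma \ref{DtoS}.)

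For the numerical consequences, Theorem \ref{VtoS} gives $\PV_{\rho(\A)}\cong \PS_0(\A)$ and $\PV_{\rho(\hat\A)}\cong \PS_0(\hat\A)$ as $\Z_m$-modules, so $s(\A)=|\PS_0(\A)|$ and $s(\hat\A)=|\PS_0(\hat\A)|$, while $\gamma(\A)=\cl(\PS_0(\A))$ and $\gamma(\hat\A)=\cl(\PS_0(\hat\A))$. Lagrange's theorem applied to the finite abelian group $\PS_0(\hat\A)$ and its subgroup $\PS_0(\A)$ yields $s(\A)\mid s(\hat\A)$. Monotonicity of composition length under submodule inclusion (which is immediate from refining a composition series for the submodule to one for the ambient module) yields $\gamma(\A)\le \gamma(\hat\A)$.

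There is essentially no obstacle once one observes the key translation: enlarging the support of $\A$ only adds rows to $B_\A$, hence only tightens the constraints that cut out $\PS_0$. The whole theorem is, in effect, the monotonicity of solution sets of homogeneous linear systems under adding equations, repackaged through the module dictionary built up in Sections~2 and~3.
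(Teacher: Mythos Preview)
Your proposal is correct and follows essentially the same route as the paper: observe that $B_{\hat\A}$ is the submatrix of $B_\A$ obtained by deleting the rows indexed by $E(\A)\setminus E(\hat\A)$, so $\PS_0(\A)\subseteq\PS_0(\hat\A)$ as $\Z_m$-submodules. The only cosmetic difference is that the paper dispatches the two numerical consequences with a blanket appeal to Theorem~\ref{stru}, whereas you spell them out via Lagrange's theorem and monotonicity of composition length; both justifications are equivalent.
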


\begin{proof}
As $\sp(\hat\A) \le \sp(\A)$, $B_{\hat\A}$ is a submatrix of $B_\A$ with rows indexed by the nonzero entries of $\hat\A$ and columns same as $B_\A$.
So, if $\x \in \PS_0(\A)$, surely $\x \in \PS_0(\hat\A)$, which implies that
$\PS_0(\A)$ is a $\Z_m$-submodule of $\PS_0(\hat\A)$.
The result follows by Theorem \ref{stru} immediately.
\end{proof}

\section{Application to hypergraphs}

Let $G=(V, E)$ be a connected $m$-uniform hypergraph and $\A(G)$ be the adjacency tensor of $G$.
Clearly, $\A(G)$ is a nonnegative symmetric weakly irreducible tensor.
The stabilizing index and the stabilizing dimension of $G$ are referred to those of
  $\A(G)$, denoted by $s(G)$ and $\gamma(G)$, respectively.

A {\it cored hypergraph} \cite{HQS} is one such that each edge contains a cored vertex, i.e. the vertex of degree one.
Let $m \ge 3$ be an integer, and let $G$ be a simple graph.
The {\it  $m$-th power} of $G$ \cite{HQS}, denoted by $G^m$,
is obtained from $G$ by replacing each edge (a $2$-set) with a $m$-set by adding $m-2$ additional vertices.
Obviously, $G^m$ is a cored hypergraph.

\begin{thm}\label{cored}
Let $H$ be a connected $m$-uniform cored hypergraph on $n$ vertices with $t$ edges.
Then $s(H)= m^{n-1-t}$, $\gamma(H) = (n-1-t)\cl(m)$.
\end{thm}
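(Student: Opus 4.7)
The plan is to show that the incidence matrix $B_H$ has exactly $t$ invariant divisors, each equal to $1$, and then invoke the equality case of Corollary \ref{stru-unit}. That is, I will compute the Smith normal form of $B_H$ over $\Z_m$ directly, rather than trying to estimate invariant divisors abstractly.

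The crucial observation is that the cored structure places a hidden identity block inside $B_H$. For every edge $e$ of $H$ choose a cored vertex $v_e\in e$, i.e., a degree-one vertex lying only in $e$. Since a degree-one vertex belongs to a unique edge, the assignment $e\mapsto v_e$ is injective, so there are $t$ pairwise distinct cored vertices $v_{e_1},\ldots,v_{e_t}$. In $B_H\in\Z_m^{t\times n}$ the column indexed by $v_{e_i}$ has a single $1$ in row $e_i$ and zeros elsewhere; in other words, it is exactly the standard basis vector $\mathbf{e}_i\in\Z_m^t$.

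I would then use these $t$ cored-vertex columns as pivots and perform elementary column operations over $\Z_m$ to annihilate the remaining $n-t$ columns. For any non-cored vertex $v$, its column has a $1$ in the row of each edge containing $v$, and each such $1$ is killed by subtracting the pivot column $\mathbf{e}_i$ corresponding to the cored vertex of that edge. After these subtractions the $t$ cored-vertex columns still form $\mathbf{e}_1,\ldots,\mathbf{e}_t$, while every other column becomes identically zero. A final permutation of columns displays the Smith normal form $\mathrm{diag}(1,\ldots,1,0,\ldots,0)$ with exactly $t$ ones, so $B_H$ has exactly $t$ invariant divisors all equal to $1$. The conclusion then falls straight out of Corollary \ref{stru-unit}.

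I do not expect any substantive obstacle; there is essentially one consistency check, namely the hypothesis $r\le n-1$ from Theorem \ref{stru}. This is automatic: the $t$ columns $\mathbf{e}_1,\ldots,\mathbf{e}_t$ are linearly independent in $\Z_m^t$, forcing $\mathrm{rank}\,B_H\ge t$, and combining with $r\le n-1$ gives $t\le n-1$. Equivalently, for $t\ge 2$ the connectedness of $H$ guarantees a non-cored vertex (otherwise the edges would be pairwise disjoint, contradicting connectivity), so $n\ge t+1$, while for $t=1$ one has $n=m\ge 3>1$.
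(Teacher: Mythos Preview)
Your proposal is correct and follows essentially the same approach as the paper: both arguments observe that the cored vertices furnish a $t\times t$ identity submatrix of $B_H$ (equivalently, $t$ standard-basis columns), deduce that $B_H$ has exactly $t$ invariant divisors all equal to $1$, and then apply Corollary~\ref{stru-unit}. Your version spells out the column reduction to Smith normal form and the bound $t\le n-1$ more explicitly, but the underlying idea is identical.
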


\begin{proof}
Let $e_1,\ldots,e_t$ be the $t$ edges of $H$.
Then $H$ contains at least $t$ cored vertices, say $1,\ldots,t$, where $i \in e_i$ for $i \in [t]$.
So, the incidence matrix $B_H$ has an $t \times t$ identity submatrix with columns indexed by $1,\ldots,t$,
which implies that $B_H$ has exactly $t$ invariant divisors all being $1$.
The result follows by Corollary \ref{stru-unit}.
\end{proof}

\begin{cor}\label{power}
Let $G$ be a connected simple graph on $n$ vertices with $t$ edges,
and let $G^m$ be the $m$-th power of $G$.
Then $s(G^m)= m^{n-1+t(m-3)}$, $\gamma(G^m) = [n-1+t(m-3)]\cl(m)$.
\end{cor}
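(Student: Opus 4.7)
The plan is to derive this as a direct consequence of Theorem \ref{cored} applied to the hypergraph $G^m$. The main task is therefore to verify the hypotheses of that theorem for $G^m$ and to count its vertices and edges correctly.

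First I would observe that $G^m$ is a connected $m$-uniform cored hypergraph: it is $m$-uniform by construction, it is connected because $G$ is connected and each original edge of $G$ is replaced by an $m$-edge that inherits the same pair of endpoints (so the adjacency structure among original vertices is preserved), and it is cored because for each edge $e$ of $G$ the $m-2$ newly added vertices of the corresponding $m$-edge in $G^m$ have degree exactly one, hence serve as cored vertices.

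Next I would count: if $G$ has $n$ vertices and $t$ edges, then $G^m$ has $n + t(m-2)$ vertices (the original $n$ plus $m-2$ new ones per edge) and still $t$ edges. Plugging these into Theorem \ref{cored} gives
\[
s(G^m) = m^{\bigl(n+t(m-2)\bigr)-1-t} = m^{n-1+t(m-3)},
\]
\[
\gamma(G^m) = \bigl[\bigl(n+t(m-2)\bigr)-1-t\bigr]\cl(m) = [n-1+t(m-3)]\cl(m),
\]
which is exactly what the corollary asserts.

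There is no real obstacle here; the only point one must be careful about is the count of cored vertices. Theorem \ref{cored} only requires that each edge contains at least one cored vertex, and the proof of that theorem actually just needs $t$ distinct cored vertices, one per edge, to produce a $t\times t$ identity submatrix of the incidence matrix. In $G^m$ we can simply pick, for each edge, any one of its $m-2$ new degree-one vertices, and these are automatically distinct across different edges. So the hypothesis of Theorem \ref{cored} is satisfied, and the computation above finishes the proof.
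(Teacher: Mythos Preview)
Your proposal is correct and follows essentially the same approach as the paper: count that $G^m$ has $n+t(m-2)$ vertices and $t$ edges, then apply Theorem \ref{cored}. You merely make explicit the verifications (connectedness, $m$-uniformity, existence of a cored vertex in each edge) that the paper leaves implicit.
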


\begin{proof}
By the definition, $G^m$ has $n+t(m-2)$ vertices and $t$ edges.
The result follows by Theorem \ref{cored}.
\end{proof}

\begin{exm}\label{linepath}
Let $P_n$ be a simple path on $n$ vertices (as simple graphs).
The $m$-th power $P_n^m$ of $P_n$ is called a \emph{hyperpath} \cite{HQS}.
It is easily seen that $s(P_n^m)= m^{(n-1)(m-2)}$, $\gamma(P_n^m) = (n-1)(m-2)\cl(m)$ by taking $t=n-1$ in Corollary \ref{power}.
%
%
Obviously, $P_2^m$ consists of exactly one edge, $s(P_2^m)=m^{m-2}$.
\end{exm}

\begin{exm}
An $m$-uniform \emph{complete hypergraph} on $n$ vertices, denoted by $K_n^{[m]}$, is a hypergraph with any $m$-subset of the vertex set being an edge.
Suppose that $n \ge m+1$.
 For any two vertices $i \ne j$, taking an arbitrary $(m-1)$-set $U$ of $K_n^{[m]}$ which does not contain $i$ or $j$,
by considering the equation $B_{K_n^{[m]}}\x =0$ on two edges $U \cup \{i\}$ and $U \cup \{j\}$,
we have $x_i=x_j$.
So the equation only has the solutions $\alpha \mathbbm{1}$ for some $\alpha \in \Z_m$, implying $s(K_n^{[m]})=1$ by Corollary \ref{stru-1}.
In particular, if $n=m+1$, then $K_{m+1}^{[m]}$ is called an \emph{$m$-simplex} \cite{CD}, and surely $s(K_{m+1}^{[m]})=1$.
\end{exm}

\begin{exm}\label{gsquid}
An $m$-uniform \emph{generalized squid}, denoted by $S(m,t)$, where $1 \le t \le m$, is obtained from an edge by attaching $t$ edges
to $t$ vertices in the edge respectively; see Fig. \ref{squid}.
In particular, $S(m,m-1)$ is called a \emph{squid} \cite{HQS}.
By a direct computation, the incidence matrix $B_{S(m,t)}$ has $t+1$ invariant divisors all being $1$.
So, $s(S(m,t))=m^{(t+1)(m-2)}$, $\gamma(S(m,t))=(t+1)(m-2)\cl(m)$.
\end{exm}

\begin{figure}[htbp]
\includegraphics[scale=0.66]{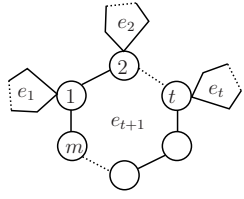}
\caption{\small An $m$-uniform generalized squid}\label{squid}
\end{figure}

Let $H=(\hat{V},\hat{E})$ be a connected spanning sub-hypergraph of $G$, that is, $\hat{V}=V$ and $\hat{E} \subseteq E$.
Then $\A(H)$ is also a nonnegative symmetric weakly irreducible tensor with same order and dimension as $\A(G)$,
   and  $\sp(\A(H)) \le \sp(\A(G))$.
So we get the following result by Theorem \ref{pattern}.

\begin{thm}\label{span-sub}
Let $G$ be a connected uniform hypergraph and let $H$ be a connected spanning sub-hypergraph of $G$.
Then $s(G) | s(H)$ and $\gamma(G) \le \gamma(H)$.
\end{thm}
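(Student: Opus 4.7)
The plan is to derive the theorem as a direct corollary of Theorem \ref{pattern}, which was already established for general tensors with a subpattern relation. The setup indicates this reduction is intended, since the paragraph preceding the statement observes that $\A(H)$ and $\A(G)$ have the same order and dimension and that $\sp(\A(H)) \le \sp(\A(G))$.

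Concretely, I would proceed as follows. First I would verify that both $\A(G)$ and $\A(H)$ satisfy the hypotheses of Theorem \ref{pattern}: they are both nonnegative symmetric tensors (by the definition of adjacency tensor of a uniform hypergraph), they share the same order $m$ (since both $G$ and $H$ are $m$-uniform) and dimension $n = |V| = |\hat V|$ (since $H$ is spanning), and they are both weakly irreducible since $G$ and $H$ are each connected (as noted in Subsection 2.5). Second, since $\hat E \subseteq E$, each nonzero entry of $\A(H)$ (which corresponds to some edge of $H$) is also a nonzero entry of $\A(G)$, so $\sp(\A(H)) \le \sp(\A(G))$. Third, applying Theorem \ref{pattern} with $\A = \A(G)$ and $\hat\A = \A(H)$ yields $s(\A(G)) \mid s(\A(H))$ and $\gamma(\A(G)) \le \gamma(\A(H))$, which by definition is exactly $s(G) \mid s(H)$ and $\gamma(G) \le \gamma(H)$.

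There is really no main obstacle here: all the heavy lifting is in Theorem \ref{pattern}, whose proof uses the key observation that $B_{\hat\A}$ arises as a row-submatrix of $B_\A$ (indexed by the edges in the smaller support), so every $\x \in \PS_0(\A)$ automatically satisfies the (fewer) equations defining $\PS_0(\hat\A)$, making $\PS_0(\A)$ a $\Z_m$-submodule of $\PS_0(\hat\A)$. From this, divisibility of cardinalities and the inequality of composition lengths follow from the structure theorem, Theorem \ref{stru}. Hence the proof of Theorem \ref{span-sub} is essentially a one-line invocation of Theorem \ref{pattern} after checking the three hypotheses (nonnegativity and combinatorial symmetry, weak irreducibility via connectedness, and the subpattern relation via $\hat E \subseteq E$).
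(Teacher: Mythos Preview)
Your proposal is correct and matches the paper's own argument exactly: the paper also deduces Theorem \ref{span-sub} as an immediate consequence of Theorem \ref{pattern}, after noting (in the paragraph just before the theorem) that $\A(H)$ is nonnegative, symmetric, weakly irreducible, of the same order and dimension as $\A(G)$, and that $\sp(\A(H)) \le \sp(\A(G))$.
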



\subsection{3-uniform hypergraphs}
In this subsection, we mainly deal with connected $3$-uniform hypergraphs $G$.
As $3$ is prime, $s(G)=3^{\gamma(G)}$.
So we only discuss the parameter $\gamma(G)$.

For convenience to depicting or drawing a $3$-uniform hypergraph $G$, we will use a triangle to represent an edge of $G$,
  where the vertices of triangles (drawing as small circles) are same as the vertices of $G$.
Two triangles sharing one common edge (respectively, one vertex) means that the corresponding two edges sharing two vertices of the common edge(respectively, the vertex).

\begin{prop} \label{3unif}
Let $G=(V, E)$ be a $3$-uniform connected hypergraph. Then $\gamma(G)>0$ if and only if there exists a nontrivial tripartition
$V=V_0\cup V_1 \cup V_2$, such that for each $e\in E$, $e\subseteq V_i$ for some $i \in \{0,1,2\}$, or $e\cap V_j\neq \emptyset$ for each $j=0, 1, 2$; see Fig. \ref{stru-3}.
\end{prop}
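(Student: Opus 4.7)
The plan is to reduce the statement to a linear algebra question over $\Z_3$ and then interpret the resulting linear condition combinatorially. Since $m=3$ is prime, Corollary \ref{stru-unit} gives $\gamma(G) = n-1-\rank B_G$ over $\Z_3$, and Corollary \ref{stru-1} says $\gamma(G) > 0$ is equivalent to $\PS_0(G) \neq \{0\}$, i.e., to the existence of some $\phi \in \Z_3^n$ satisfying $B_G \phi = 0$ (mod $3$) which is not of the form $\alpha\mathbbm{1}$. So the whole proposition reduces to exhibiting a bijection between nonconstant solutions of $B_G \phi = 0$ and nontrivial tripartitions of the stated type.

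First I would translate the matrix equation edge-by-edge: for each $e = \{i_1,i_2,i_3\} \in E$, the row of $B_G$ indexed by $e$ imposes
\[
\phi_{i_1} + \phi_{i_2} + \phi_{i_3} \equiv 0 \pmod{3}.
\]
Given any $\phi:V \to \Z_3$, let $V_k = \phi^{-1}(k)$ for $k=0,1,2$. The only triples $(a_1,a_2,a_3) \in \Z_3^3$ (as multisets) summing to $0$ mod $3$ are $(k,k,k)$ for some $k$ and $(0,1,2)$. Hence the edge-equation holds for $e$ if and only if either $e \subseteq V_i$ for some $i$ (the monochromatic case) or $e \cap V_j \neq \emptyset$ for each $j=0,1,2$ (the rainbow case). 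This is exactly the combinatorial condition appearing in the proposition.

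For the forward direction, I take a nonconstant $\phi$ from $\PS_0(G) \neq \{0\}$ and define $V_0,V_1,V_2$ as above; the tripartition is nontrivial because $\phi$ is not constant, and the edge condition holds by the observation. For the converse, given a nontrivial tripartition of the stated form, define $\phi_v := k$ for $v \in V_k$. Then every edge satisfies the row-equation (monochromatic edges contribute $3k\equiv 0$, rainbow edges contribute $0+1+2 \equiv 0$), so $B_G\phi \equiv 0$ and $\phi$ is nonconstant, whence $\PS_0(G)\neq 0$ and $\gamma(G)>0$.

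The only subtlety, and the one I expect to be the main obstacle, is interpreting ``nontrivial''. If the tripartition allows just two of the $V_k$ to be nonempty (say $V_2=\emptyset$, with $V_0, V_1$ both nonempty), then every edge must be monochromatic, and connectivity of $G$ would force all vertices to lie in a single $V_k$, contradicting nontriviality. So the connectedness hypothesis on $G$ automatically upgrades any witness to one using all three colors, and conversely any such $\phi$ may be obtained (up to an additive constant in $\Z_3$) from any nontrivial tripartition satisfying the edge condition. I would record this observation to make the correspondence between nonconstant $\phi$'s and genuine $3$-part tripartitions completely clean, and then conclude by invoking Corollary \ref{stru-1}.
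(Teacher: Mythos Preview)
Your proposal is correct and follows essentially the same approach as the paper: both reduce via Corollary~\ref{stru-1} to the existence of a nonconstant $\phi\in\Z_3^n$ with $B_G\phi=0$, interpret the row equation $\phi_{i_1}+\phi_{i_2}+\phi_{i_3}\equiv 0$ as forcing each edge to be monochromatic or rainbow under the level sets $V_k=\phi^{-1}(k)$, and use connectedness to ensure all three parts are nonempty. Your treatment of the ``nontrivial'' subtlety is slightly more explicit than the paper's one-line remark, but the argument is otherwise identical.
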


\begin{proof} Let $[n]$ be the set of vertices of $G$.
Suppose $\gamma(G)>0$. Then the equation $B_{G}\x=0$ over $\Z_3$ has a solution
$\phi=(\phi_1, \phi_2, \cdots, \phi_n) \in \Z_3^n$ such that $\phi_1=0$ and $\phi_j\neq 0$ for some $j\in [n]$ by Corollary \ref{stru-1}.
Denote $V_i=\{j\in V: \phi_j=i\}$, $i=0, 1, 2$.
If $e=\{r, s, t\}\in E$, then $\phi_{r}+\phi_{s}+\phi_{t}=0$ over $\Z_3$.
It follows that $e \subseteq  V_i$ for some $i \in \{0,1,2\}$, or $e \cap V_j \neq \emptyset$ for each $j=0, 1, 2$.
Each $V_i$ is nonempty; otherwise $G$ is not connected.
So $V_0,V_1,V_2$ form a nontrivial tripartition of $V$.

Conversely, assume that there exits a nontrivial tripartition $V=V_0\cup V_1 \cup V_2$ satisfying the condition of the proposition.
Set $\phi=(\phi_1, \cdots, \phi_n)$, where $\phi|_{V_i}=i$ for $i=0, 1, 2$.
It is easily seen that $\phi$ is a solution of $B_{\A(G)}\x=0$ over $\Z_3$.
Clearly, $\phi\notin \Z_3\mathbbm{1}$.
So $\gamma(G)>0$.
\end{proof}

\begin{figure}[htbp]\label{stru-3}
\includegraphics[scale=0.6]{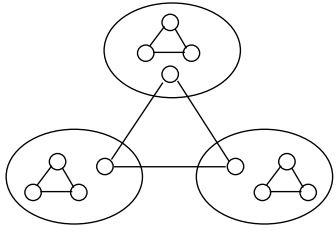}
\caption{\small Structure of $3$-uniform connected hypergraphs with positive stabilizing dimension}\label{stru-3}
\end{figure}

A tripartition of a $3$-uniform connected hypergraph $G$ satisfying Proposition \ref{3unif}
is equivalent to a surjective \emph{labeling}(or a map)
\begin{equation}\label{label}
f: V(G) \to \{0,1,2\}
\end{equation}
 such that each edge of $G$ has three vertices with the same label or pairwise different labels, and there exists at least one edge
 of $G$ which has three vertices with  pairwise different labels.

\begin{exm}
Let $G$ be an \emph{$n$-wheel}($n \ge 3$), a $3$-uniform hypergraph with $n$ edges illustrated in Fig. \ref{wheel}.
By labeling the vertices of $G$, it is easy to verified that $\gamma(G)>0$ if and only if $n$ is even.
An $n$-wheel is said {\it odd} (respectively, {\it even}) if $n$ is  {\it odd} (respectively, {\it even}).
By Theorem \ref{span-sub}, the $3$-simplex has zero stabilizing dimension as it contains a $3$-wheel as a spanning sub-hypergraph.
\end{exm}

\begin{figure}[htbp]
\includegraphics[scale=0.6]{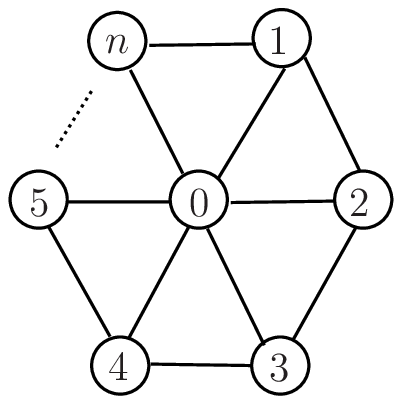}
\caption{\small $n$-wheel}\label{wheel}
\end{figure}

\begin{exm}
Let $G$ be a connected $3$-uniform hypergraph with a pendant triangle (edge)
(i.e. the triangle sharing only one vertex with other triangles). Then $\gamma(G)>0$.
Let $\{v_0,v_1,v_2\}$ be the pendant triangle of $G$, where $v_3$ also belongs to the other triangles $G$.
Labeling $v_0$ with $0$, $v_1$ with $1$, and all other vertices with $2$. Such labeling holds (\ref{label}).
\end{exm}

\begin{exm} \label{infinity}
We show that there exists a sequence of infinite many connected $3$-uniform hypergraphs $G$ with $\gamma(G)>0$ (respectively, $\gamma(G)=0$).
Let $\{G_i\}$ be a sequence of $3$-uniform connected hypergraphs constructed as follows.
\begin{itemize}
\item[(1)] $G_0$ is a connected $3$-uniform hypergraph.

\item[(2)] Suppose $G_i$ is constructed for $i \ge 0$. The hypergraph $G_{i+1}$ is obtained from $G_i$ by adding a new triangle which shares exactly one edge with the triangles of $G_i$.
\end{itemize}

\noindent Then $\gamma(G_i) >0$ (respectively, $\gamma(G_i) =0$) for all $i \ge 1$ if and only if $\gamma(G_0)>0$ (respectively, $\gamma(G_0)=0$).

Suppose $\gamma(G_0)>0$.
Then $G_0$ has a labeling $f$ holding (\ref{label}).
We will extend this labeling to $G_1$.
Let $\triangle=\{u,v,w\}$ be the triangle of $G_1$ added to $G_0$,
   where the edge $uv$  of $\triangle$ is also shared by some triangles of $G_0$.
If $f(u)=f(v)$, set $f(w)=f(u)$; otherwise, set $f(w)$ to be the only element in $\{0,1,2\}\backslash \{f(u),f(v)\}$.
So we get a labeling of $G_1$ holding (\ref{label}). Similarly, by induction we can show $\gamma(G_i) >0$ for all $i \ge 1$.

Next suppose $\gamma(G_0) =0$.
Assume that $\gamma(G_i) >0$ for some $i \ge 1$.
Then $G_i$ has a labeling $f$ holding (\ref{label}).
Let $\triangle=\{u,v,w\}$ be the triangle of $G_i$ added to $G_{i-1}$,
  where the edge $uv$  of $\triangle$ is  shared by some triangles of $G_{i-1}$.
If $f(u)=f(v)=f(w)$, then $f|_{G_{i-1}}$ is also a labeling holding (\ref{label}).
Otherwise, there exists a triangle $\hat\triangle=\{u,v,\hat{w}\}$ of $G_{i-1}$ whose vertices receive pairwise different labels ($f(\hat{w})=f(w)$), which implies that
$f|_{G_{i-1}}$ is also a labeling holding (\ref{label}).
So, by induction, we finally get that $G_0$ has a labeling holding (\ref{label}), a contradiction.

The assertion now follows by taking $G_0$ to be an even wheel or odd wheel.
\end{exm}

\subsection{Path cover}

Let $G=(V, E)$ be an $m$-uniform hypergraph.
Recall a \emph{walk} $W$ in $G$ of length $l$ is a nonempty alternating sequence
\[W: v_1, e_1, v_2, e_2, \cdots, v_l, e_l, v_{l+1}\]
of vertices and edges in $G$ such that $\{v_i, v_{i+1}\}\subseteq e_i$ for all $i=1, \cdots, l$.
The walk $W$ is called a \emph{path}, if
\begin{enumerate}
\item the vertices and the edges appeared in $W$ are all distinct;

\item for each $i=1, \ldots, l-1$, $ v_i \notin \cup_{j=i+1}^l e_j$.
\end{enumerate}
In this case, we say the path $W$ \emph{covers} the vertices $v_1,\ldots,v_l$ appeared in the path.
A single vertex is called a \emph{trivial path} with length $0$.
Two paths $W_1,W_2$ are called {\it disjoint} if any edge of $W_1$ contains no vertices covered by $W_2$, and vise versa.
A \emph{path cover} $\mathcal{C}$ of $G$ is a set of disjoint paths in $G$ which together cover all vertices of $G$.
The \emph{path cover number} of $G$, denoted by $\pc(G)$, is defined as the minimum cardinality of the path covers $\mathcal{C}$ of $G$.
If $m=2$, the above notions of path, path cover and path cover number are consistent with those of simple graphs respectively.

By the following lemma, the path cover number of $G$ can be defined equivalently as the
the maximum number of edges appeared in the path covers $\mathcal{C}$ of $G$.

\begin{lem}\label{pce}
Let $G$ be a connected uniform hypergraph on $n$ vertices and $\mathcal{C}=\{P_1, P_2, \cdots, P_t\}$ be a path cover of $G$.
Let $e(\mathcal{C})$ be the number of edges appeared in $\mathcal{C}$.
Then $e(\mathcal{C})=n-t$.
\end{lem}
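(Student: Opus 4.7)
The plan is a direct vertex–edge counting argument. First I would observe that each path $P_i\in\mathcal{C}$ of length $l_i$ contributes exactly $l_i$ edges to $\mathcal{C}$ (by condition (1) of the path definition, its edges are distinct) and covers exactly $l_i+1$ distinct listed vertices $v_1^{(i)},\ldots,v_{l_i+1}^{(i)}$ appearing in its walk sequence; for a trivial path ($l_i=0$), this reads as $0$ edges and $1$ vertex, namely the single vertex itself.

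Next I would show that the sets of vertices covered by distinct paths in $\mathcal{C}$ are pairwise disjoint. Suppose, for contradiction, that a vertex $v$ were covered by two distinct paths $P_i$ and $P_j$. Then $v$ lies in some edge of $P_i$ (every listed vertex of a nontrivial path is incident to one of its edges, and a trivial path's vertex is covered only by that trivial path) while $v$ is also covered by $P_j$. This directly contradicts the definition of disjoint paths, which demands that any edge of $P_i$ contains no vertex covered by $P_j$.

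Since $\mathcal{C}$ is a path cover, the union of the covered-vertex sets of all the $P_i$ equals $V(G)$. Combining this with the disjointness above, the covered-vertex sets form a partition of $V(G)$. Summing the contributions over $i$ gives
\[
n \;=\; \sum_{i=1}^{t}(l_i+1) \;=\; e(\mathcal{C})+t,
\]
which rearranges to $e(\mathcal{C})=n-t$, as required.

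The main subtlety, rather than a genuine obstacle, is setting up the per-path count $l_i+1$ consistently with the path definition (in particular, confirming that the terminal vertex $v_{l_i+1}^{(i)}$ is counted among the covered vertices and that condition (1) forces all $l_i+1$ listed vertices to be distinct). Once this accounting and the disjointness step are in hand, the identity follows from a single summation.
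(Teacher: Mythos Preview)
Your argument is correct and is essentially the same counting as in the paper's proof. The paper packages it slightly differently, replacing each hypergraph path $P_i$ by the simple path $\widetilde{P_i}$ on its listed vertices $v_{i_1},\ldots,v_{i_{n_i+1}}$ and then using that these simple paths are vertex-disjoint and cover $V(G)$; this amounts exactly to your summation $n=\sum_{i=1}^t(l_i+1)=e(\mathcal{C})+t$.
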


\begin{proof}
For each path $P_i\colon v_{i_1}, e_{i_1}, v_{i_2}, \cdots, v_{i_{n_i}}, e_{i_{n_i}}, v_{i_{n_i+1}}$ containing at least one edge,
we construct a simple path (as simple graph) by linking a simple edge between $v_{i_k}$ and $v_{i_{k+1}}$ for $k=1,\ldots,n_i$, and get a simple path $\widetilde{P_i}$ .
Clearly, $P_i$ and $\widetilde{P_i}$ have the same number of edges.
By definition, $\widetilde{P_1}, \widetilde{P_2}, \cdots, \widetilde{P_t}$ are disjoint to each other, and their union contain all the vertices of $G$.
So,
\[
e(\mathcal{C})=\sum_{i=1}^t e(P_i)=\sum_{i=1}^t e(\widetilde{P_i})=n-t,
\]
where $e(P)$ denotes the number of edges in $P$.
\end{proof}

\begin{exm} Let $G$ be a $3$-uniform hypergraph in Figure \ref{pc}.
There exists a path cover consisting of a path covering the vertices $1,2,\ldots,8$, a path covering the vertices $9,10$, and $7$ trivial paths which are $11, 12, \ldots,17$ respectively. The path cover number of $G$ is $9$.
\end{exm}

\begin{figure}[htbp]
\includegraphics[scale=0.6]{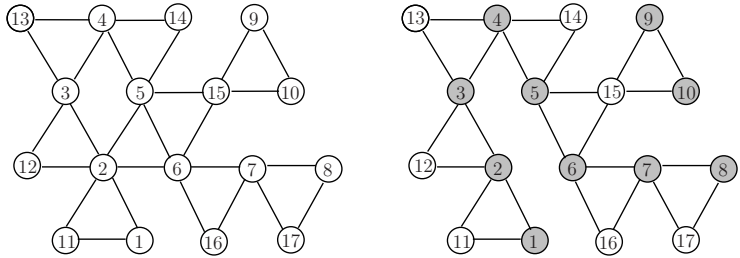}
\caption{\small A $3$-uniform hypergraph listed in left side, and the its path cover is listed in right side
}\label{pc}
\end{figure}

\begin{thm}\label{path cover}
Let $G=(V, E)$ be a connected $m$-uniform hypergraph with path cover number $pc(G)$.
Then $s(G) \le m^{{\small \pc(G)}-1}$, $\gamma(G)\le (\pc(G)-1)\cl(m)$.
\end{thm}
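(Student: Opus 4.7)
The plan is to bound the number of unit invariant divisors of the incidence matrix $B_G$ from below by $n - \pc(G)$ and then invoke Corollary \ref{stru-unit}. Concretely, I will exhibit an $(n-\pc(G)) \times (n-\pc(G))$ submatrix of $B_G$ whose determinant equals $1$; standard determinantal-divisor reasoning (applied to the Smith normal form over $\Z_m$) then forces the first $n-\pc(G)$ invariant divisors of $B_G$ to equal $1$.

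First, I would fix a minimum path cover $\mathcal{C} = \{P_1, \ldots, P_t\}$ of $G$ with $t = \pc(G)$; by Lemma \ref{pce}, the total number of edges in $\mathcal{C}$ is $n - t$. For each non-trivial path $P_i: v_1^{(i)}, e_1^{(i)}, v_2^{(i)}, \ldots, v_{l_i+1}^{(i)}$ I would select the $l_i$ vertices $v_1^{(i)}, v_2^{(i)}, \ldots, v_{l_i}^{(i)}$, that is, every path-vertex except the last; from each trivial path I select no vertex. The total number of selected vertices is $\sum_i l_i = n - t$, which matches the number of rows I am about to use.

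Next, let $M$ be the submatrix of $B_G$ with rows indexed by the edges of $\mathcal{C}$ and columns indexed by the selected vertices. I would verify two facts: (i) $M$ is block-diagonal, one block per non-trivial path, since pairwise disjointness of the paths in $\mathcal{C}$ guarantees that no edge of $P_i$ meets a selected vertex of $P_j$ for $i \ne j$; (ii) the block for $P_i$ is upper triangular with $1$'s on the diagonal, because the entry at $(e_a^{(i)}, v_b^{(i)})$ is nonzero only when $v_b^{(i)} \in e_a^{(i)}$, the path condition $v_b^{(i)} \notin \bigcup_{k > b} e_k^{(i)}$ then forces $a \le b$, and the diagonal entries are $1$ because $\{v_a^{(i)}, v_{a+1}^{(i)}\} \subseteq e_a^{(i)}$ by the definition of a path. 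Hence $\det M = 1$, so $B_G$ has at least $n - t$ unit invariant divisors over $\Z_m$.

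Applying Corollary \ref{stru-unit} then immediately yields $s(G) \le m^{n - 1 - (n - t)} = m^{\pc(G) - 1}$ and $\gamma(G) \le (\pc(G) - 1)\cl(m)$, as required. The main subtlety I anticipate is that the path definition \emph{a priori} permits a path-vertex $v_b^{(i)}$ to appear as an extra vertex of an earlier edge $e_k^{(i)}$ with $k < b - 1$, since only \emph{later} edges are forbidden to contain $v_b^{(i)}$. Fortunately, this one-sided asymmetry is precisely what guarantees upper triangularity of each block; the heart of the argument is to choose the column set so that the asymmetry of the path definition aligns with the row ordering by edge index.
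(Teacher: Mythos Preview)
Your proposal is correct and follows essentially the same approach as the paper's own proof: both take a minimum path cover, extract a square submatrix of $B_G$ with rows indexed by the cover's edges that is upper triangular with $1$'s on the diagonal, deduce at least $n-\pc(G)$ unit invariant divisors, and invoke Corollary~\ref{stru-unit} together with Lemma~\ref{pce}. Your version is in fact more explicit than the paper's: you spell out exactly which columns to take (all spine vertices of each path except the last), verify block-diagonality from the disjointness condition, and explain precisely how the one-sided constraint $v_b\notin\bigcup_{k>b}e_k$ in the path definition yields upper triangularity. One small remark: your appeal to ``determinantal-divisor reasoning over $\Z_m$'' is slightly delicate when $m$ is composite, but since your submatrix is upper triangular with unit diagonal it can be reduced to the identity by elementary row operations over any commutative ring, so the conclusion about unit invariant divisors is safe.
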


\begin{proof} Let $B_G$ be the incidence matrix of $G$.
Let $\mathcal{C}$ be a path cover of $G$ with minimum cardinality $\pc(G)$.
By relabeling the vertices and edges of the paths in $\mathcal{C}$,
we can get a square submatrix $\hat{B}$ of $B_G$ with rows indexed by the edges appeared in $\mathcal{C}$, which is a upper triangular matrix with $1$'s on the main diagonal.
So $B_G$ has at least $e(\mathcal{C})$ unit invariant divisors.
So, by Corollary \ref{stru-unit} and Lemma \ref{pce},
\[s(G) \le m^{n-1-e(\mathcal{C})}=m^{{\small \pc(G)}-1}, \gamma(G) \le (n-1-e(\mathcal{C}))\cl(m) =(\pc(G)-1)\cl(m),\] where $n$ is the number of vertices of $G$.
\end{proof}

Recall a \emph{matching} of a uniform hypergraph $G=(V, E)$ on $n$ vertices is a set of independent edges of $G$,
and the \emph{matching number} of $G$ is the maximum cardinality of matchings of $G$, denoted by $\mu(G)$.
Let $\mathcal{M}$ be  a matching of $G$ with cardinality $\mu(G)$.
Then $\mathcal{M}$ together with some singletons consists a path cover $\mathcal{C}$ of $G$.
By Lemma \ref{pce}, $$\mu(G)=n-|\mathcal{C}| \le n-\pc(G),$$
 with equality if and only if the edge set $E$ consists of disjoint edges;
otherwise there exists one edge outside $\mathcal{M}$ intersecting with edges of $\mathcal{M}$,
which yields a path cover with one more edges than  $\mathcal{C}$; a contradiction to Lemma \ref{pce}.
So, if $G$ is connected with more than one edge, then $\mu(G) \le n-\pc(G)-1$, and hence $\pc(G) \le n-\mu(G)-1$.
Similarly, if letting $d(G)$ be the maximum length of the paths in $G$, then $\pc(G) \le n-d(G)$.
So by Theorem \ref{path cover}, we get the following results directly.

\begin{cor}\label{match}
Let $G$ be a connected $m$-uniform connected hypergraph on $n$ vertices with matching number $\mu(G)$, which contains more than one edge.
Then $s(G) \le m^{n-\mu(G)-2}$, $\gamma(G)\le (n-\mu(G)-2)\cl(m)$.
\end{cor}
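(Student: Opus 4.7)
The plan is to chain the inequality $\pc(G)\le n-\mu(G)-1$, argued in the paragraph immediately preceding the corollary, with the bounds of Theorem \ref{path cover}. The whole argument is essentially a substitution.

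Concretely, I would first recall the bound on $\pc(G)$. Starting from a maximum matching $\mathcal{M}$ of cardinality $\mu(G)$, extend it to a path cover $\mathcal{C}$ of $G$ by adjoining singleton trivial paths at the $n-m\mu(G)$ vertices not covered by $\mathcal{M}$. Lemma \ref{pce} then gives $|\mathcal{C}|=n-e(\mathcal{C})=n-\mu(G)$. Since $G$ is connected and contains at least two edges, maximality of $\mathcal{M}$ forces some edge $e'\notin\mathcal{M}$ to share a vertex $v$ with some $e\in\mathcal{M}$. I would then merge the singleton path consisting of $e$ together with a trivial path at some $u\in e'\setminus e$ into a single length-two path $u,e',v,e,w$, where $w\in e\setminus e'$. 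This strictly reduces the number of paths in the cover, so $\pc(G)\le n-\mu(G)-1$.

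Finally, Theorem \ref{path cover} yields $s(G)\le m^{\pc(G)-1}$ and $\gamma(G)\le(\pc(G)-1)\cl(m)$; substituting the bound on $\pc(G)$ gives $s(G)\le m^{n-\mu(G)-2}$ and $\gamma(G)\le(n-\mu(G)-2)\cl(m)$, which is the desired conclusion. The only point requiring mild care is that the length-two walk $u,e',v,e,w$ genuinely meets the strict definition of a path used in the paper, namely distinct vertices and edges along with the endpoint condition $v_i\notin\bigcup_{j>i}e_j$; but the choices $u\in e'\setminus e$, pivot vertex $v\in e\cap e'$, and terminal vertex $w\in e\setminus e'$ make every one of these conditions automatic. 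I do not anticipate any further obstacle, since the statement is essentially a direct corollary.
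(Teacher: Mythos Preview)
Your approach is exactly the paper's: the paragraph immediately preceding the corollary establishes $\pc(G)\le n-\mu(G)-1$ by building a matching-based path cover and then improving it by one edge (using connectivity and $|E|\ge 2$), after which Theorem~\ref{path cover} is invoked directly. Your explicit verification that $u,e',v,e,w$ satisfies the path axioms just adds detail to the same argument.
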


\begin{cor}\label{maxp}
Let $G$ be a connected $m$-uniform connected hypergraph on $n$ vertices, and let $d(G)$ be the maximum length of the paths in $G$.
Then $s(G) \le m^{n-d(G)-1}$, $\gamma(G)\le (n-d(G)-1)\cl(m)$.
\end{cor}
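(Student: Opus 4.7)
The plan is to deduce Corollary \ref{maxp} directly from Theorem \ref{path cover} by exhibiting a path cover whose cardinality is at most $n-d(G)$, so that the bound $\pc(G)\le n-d(G)$ already noted in the paragraph preceding the statement can be fed into the conclusions of Theorem \ref{path cover}.

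First I would pick a path $P^\ast$ in $G$ realizing the maximum length $d(G)$. By the construction in the proof of Lemma \ref{pce} (which passes from a hyperpath to a simple path with the same number of edges and uses that a simple path of length $l$ has $l+1$ vertices), the alternating sequence of $P^\ast$ has $d(G)+1$ distinct vertices. Hence $P^\ast$ covers exactly $d(G)+1$ vertices of $G$.

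Next, I would form a path cover $\mathcal{C}$ consisting of $P^\ast$ together with a trivial path (a single vertex, of length $0$) at each of the remaining $n-d(G)-1$ vertices. This is a legitimate path cover, since trivial paths are admitted by the definition and the family is pairwise disjoint by construction, covering all of $V(G)$. Therefore
\[ \pc(G)\;\le\;|\mathcal{C}|\;=\;1+(n-d(G)-1)\;=\;n-d(G). \]

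Finally, substituting this inequality into Theorem \ref{path cover} yields
\[ s(G)\;\le\;m^{\pc(G)-1}\;\le\;m^{n-d(G)-1}, \qquad \gamma(G)\;\le\;(\pc(G)-1)\,\cl(m)\;\le\;(n-d(G)-1)\,\cl(m), \]
which is exactly the desired conclusion. There is no real obstacle here: the inequality $\pc(G)\le n-d(G)$ is flagged in the running text just above the corollary, and the rest is a one-line substitution into an already-established upper bound. The only small point that deserves a sentence of care is justifying that $P^\ast$ covers $d(G)+1$ vertices, which follows from the counting argument embedded in Lemma \ref{pce}.
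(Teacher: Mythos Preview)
Your proposal is correct and follows exactly the paper's approach: the paper records the inequality $\pc(G)\le n-d(G)$ in the text immediately preceding the corollary and then invokes Theorem \ref{path cover}, while you have simply made explicit the path-cover construction (a longest path together with singletons) behind that inequality. Your observation that $P^\ast$ covers $d(G)+1$ vertices is the only detail the paper leaves implicit, and you justify it appropriately via Lemma \ref{pce}.
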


\begin{exm}
We will illustrate the above bounds for the stabilizing index and stabilizing dimension are all tight.
We only consider the stabilizing dimension here. The stabilizing index can be discussed similarly.
Let $P_n^m$ be the hyperpath as in Example \ref{linepath}.
We find that $\gamma(P_n^m)=(n-1)(m-2)\cl(m)$, $\pc(P_n^m)=(n-1)(m-2)+1$ and $d(P_n^m)=n-1$.
So, $$\gamma(P_n^m)=(\pc(P_n^m)-1)\cl(m)=[n+(n-1)(m-2)-d(P_n^m)-1]\cl(m),$$ implying the upper bounds in Theorem \ref{path cover} and Corollary \ref{maxp}
 hold as equalities for $P_n^m$.


Consider the generalized squid $S(m,t)$ in Example \ref{gsquid}.
The edges $e_1,\ldots, e_t$ consists of a matching of $S(m,t)$,
and $\mu(S(m,t))=t$ as $S(m,t)$ has $tm+m-t$ vertices and the maximum cardinality of matchings of $H$ is at most $\lfloor \frac{tm+m-t}{m} \rfloor=t$.
So, $$\gamma(S(m,t))=(t+1)(m-2)\cl(m)=(tm+m-t-\mu(S(m,t))-2)\cl(m),$$
implying the upper bound in Corollary \ref{match} holds as equality for $S(m,t)$.
\end{exm}

For the result in this subsection, if an $m$-uniform hypergraph $G$ has a prime order, i.e. $m$ is prime, then $\cl(m)=1$.
So we can get more brief formulas for the above upper bounds.

\section{Remark}
Let $A$ be a tensor of order $m$.
Chang et.al \cite{CPZ} define the geometric multiplicity of an eigenvalue $\la$ of $\A$ as the maximum number of linearly independent eigenvectors in $\V_{\la}$ though we know $\V_{\la}$ is not a linear space in general.
Li et.al \cite{LiYY} give a new definition of geometric multiplicity based on nonnegative irreducible tensors and two-dimensional nonnegative tensors.
Hu and Ye \cite{HuYe} define the geometric multiplicity of an eigenvalue $\la$ of $\A$ to be the dimension of $\V_\la$ as an affine variety, and try to establish a relationship between the algebraic multiplicity and geometric multiplicity of an eigenvalue of $\A$.

If $\A$ is a nonnegative combinatorial symmetric weakly irreducible tensor, the projective eigenvariety $\PV_{\rho(\A)}$
contains finite points, and has dimension $0$ as its irreducible subvarieties are singletons.
If considering the affine eigenvariety $\V_{\rho(\A)}$, it has dimension $1$.
By any one of the above definitions of geometric multiplicity, we cannot get useful information of the eigenvariety $\V_{\rho(\A)}$ or $\PV_{\rho(\A)}$.
In this paper, from the algebraic viewpoint, we prove that  $\PV_{\rho(\A)}$ admits a $\Z_m$-module, whose structure
can be characterized explicitly by solving the Smith normal of the incidence matrix of $\A$.
Furthermore, such algebraic structure is only determined by the combinatoric structure of $\A$, i.e. the support of $\A$.

\section*{appendix}


\begin{proof}[Proof of Lemma \ref{ev}.]
Suppose that $\A$ has order $m$ and dimension $n$.
As $\A$ is nonnegative and weakly irreducible, by Theorem \ref{PF1}(2),
there is a unique positive eigenvector $\x$ of $\A$ corresponding to the spectral radius $\rho(\A)$, up to a scalar.
Let $D$ be the diagonal matrix such that its diagonal entries $d_{ii}=x_i$ for $i \in [n]$.
Then by the eigenvector equation $\A \x^{m-1}=\rho(\A) \x^{[m-1]}$, we can verify that
\begin{equation}\label{stoB}\B:=\rho(\A)^{-1} D^{-(m-1)}\A D \end{equation}
is a stochastic tensor, i.e. $\B \mathbbm{1}^{m-1}=\mathbbm{1}$.
Note that $\B$ is also nonnegative and weakly irreducible, by  Theorem \ref{PF1}(2), $\rho(\B)=1$, with $\mathbbm{1}$ as the unique positive eigenvector.

We first assert that if there exists a nonzero nonnegative vector $\z$ such that
\begin{equation}\label{ineB}\B \z^{m-1} \ge \z^{[m-1]},\end{equation}
 then $\z = \alpha \mathbbm{1}$ for some positive $\alpha$, i.e. $\B \z^{m-1} = \z^{[m-1]}$.
Let $\delta=\max\{z_i: i \in [n]\} >0$, and let $I=\{i \in [n]: z_i=\delta\}$.
If $I=[n]$, the assertion follows.
Otherwise, let $J=[n]\backslash I$.
By the weakly irreducibility of $\B$,
there exists an index $\bar{i}_1 \in I$ and an index $\bar{i}_j \in J$ such that $\B$ has an entry $b_{\bar{i}_1 \cdots \bar{i}_j \cdots \bar{i}_m} >0$,
where $2 \le j \le m$.
So,
\begin{eqnarray*}
(\B z^{m-1})_{\bar{i}_1} & = & b_{\bar{i}_1 \cdots \bar{i}_j \cdots \bar{i}_m} z_{\bar{i}_1} \cdots z_{\bar{i}_j} \cdots z_{\bar{i}_m}+
               \sum_{i_j \ne \bar{i}_j, j=2,\ldots,m} b_{\bar{i}_1 i_2 \cdots i_m} z_{\bar{i}_1}z_{i_2} \cdots z_{i_m}  \\
               & < & \left(b_{\bar{i}_1 i_2 \cdots i_m} + \sum_{i_j \ne \bar{i}_j, j=2,\ldots,m} b_{\bar{i}_1 i_2 \cdots i_m}\right) \delta^{m-1}= \delta^{m-1}=z_{\bar{i}_1}^{m-1}.
\end{eqnarray*}
But this contradicts to the inequality (\ref{ineB}).

Now suppose that $\y$ is an eigenvector of $\A$ corresponding to an eigenvalue $\la$ with $|\la|=\rho(\A)$.
Then from the eigenvector equation $\A \y^{m-1}=\la \y^{[m-1]}$, we have $$\A |\y|^{m-1} \ge \rho(\A) |\y|^{[m-1]}.$$
Substituting $\A$ for $\B$ in (\ref{stoB}), we have
$$ (D^{(m-1)}\B D^{-1}) |\y|^{m-1} \ge  |\y|^{[m-1]},$$
and hence
$$\B (D^{-1}|\y|)^{m-1} \ge  (D^{-1}|\y|)^{[m-1]}.$$
By the above assertion, $D^{-1}|\y|=\alpha \mathbbm{1}$ for some positive $\alpha$, implying that $|\y|=\alpha \x$, and $\A |\y|^{m-1} = \rho(\A) |\y|^{[m-1]}$.
\end{proof}

\begin{proof}[Proof of Theorem \ref{PF2}.]
(1) Let $\y$ be an eigenvector of $\B$ corresponding to an eigenvalue $\beta$.
Then
$$ |\beta| |\y|^{[m-1]} \le |\B||\y|^{m-1} \le \A |\y|^{m-1}.$$
Then by Theorem 5.3 of \cite{YY1},
$$ \rho(\A)=\max_{\x \gneq 0}\min_{x_i>0} \frac{(\A \x^{m-1})_i}{x_i^{m-1}} \ge \min_{|y_i|>0} \frac{(\A |\y|^{m-1})_i}{|y_i|^{m-1}} \ge |\beta|.$$

(2) If $\beta=\rho(\A)e^{\i \theta}$, then $\A |\y|^{m-1} \ge \rho(\A)|\y|^{[m-1]}$.
So, by what we have discussed in the last part of the proof of Lemma \ref{ev}, $\A |\y|^{m-1} = \rho(\A)|\y|^{[m-1]}$,
and $|\y|$ is the unique positive eigenvector of $\A$ corresponding to $\rho(\A)$.
We also have $|\B||\y|^{m-1}=\rho(\A)|\y|^{[m-1]}$.
So $(\A - |\B|)|\y|^{m-1}=0$, and then $\A =|\B|$ as $\A \ge |\B|$ and $|\y|$ is positive.

Define $D=D_{\y}$ as in (\ref{Dy}).
Then $\y=D|\y|$.
By the eigenvector equation $\B \y^{m-1}=\rho(\A)e^{\i \theta} y^{[m-1]}$, we have
$\B (D|\y|)^{m-1}=\rho(\A)e^{\i \theta}(D|\y|)^{[m-1]}$.
So
$$ (e^{-\i \theta}D^{-(m-1)}\B D) |\y|^{m-1}=\rho(\A)|\y|^{[m-1]},$$
and $$ (\A - e^{-\i \theta}D^{-(m-1)}\B D) |\y|^{m-1}=0.$$
Noting that $ |e^{-\i \theta}D^{-(m-1)}\B D|=|\B|=\A$,  we have $\A = e^{-\i \theta}D^{-(m-1)}\B D$.
\end{proof}

\begin{proof}[Proof of Theorem \ref{specsymm}.]
Suppose that the $k$ eigenvalues of $\A$ with modulus $\rho(\A)$ are $\rho(\A)e^{\i \theta_j}$, $j=0,1,\ldots,k-1$, where $\theta_0=0$ and $\theta_1$ is one with
smallest positive argument among all $\theta_j$ for $j=1,2,\ldots,k-1$.
By Theorem \ref{PF2}(2), there exists a invertible diagonal matrix $D$ such that
$$ \A =e^{-\i \theta_1} D^{-(m-1)} \A D.$$
So $\A$ is diagonal similar to $e^{-\i \theta_1}\A$, which implies they have the same spectra.
Then $\A$ has eigenvalues $\rho(A)e^{\i t\theta_1}$ for any positive integer $t$.
But $\A$ only has $k$ eigenvalues of  modulus $\rho(\A)$, so $ \theta_1=\frac{2 \pi }{k}$, and those $k$ eigenvalues are
$\rho(A)e^{\i\frac{2 \pi j}{k}}$, $j=0, 1, \ldots, k-1$.
The remaining part is easily seen by Theorem \ref{PF2}(2) as $ \rho(A)e^{\i\frac{2 \pi }{k}}$ is an eigenvalue of $\A$,
and $\A$ is diagonal similar to $e^{-\i \frac{2 \pi j}{k}}\A$.
\end{proof}



\begin{thebibliography}{50}
\bibitem{AF} F. W. Andersion, K. R. Fuller, \textit{Rings and Categories of Modules}, Springer-Verlag, New York, 1992.

\bibitem{Ber} C. Berge, \textit{Hypergraphs: Combinatorics of finite sets}, North-Holland, 1989.

\bibitem{CD} J. Cooper, A. Dutle, \textit{Spectra of uniform hypergraphs}, Linear Algebra Appl., \textbf{436} (2012), 3268-3292.


\bibitem{CPZ} K. C. Chang, K. Pearson, T. Zhang, \textit{Perron-Frobenius theorem for nonnegative tensors}, Commu. Math. Sci., \textbf{6} (2008), 507-520.

\bibitem{CPZ2} K. C. Chang, K. Pearson, T. Zhang, \textit{On eigenvalue problems of real symmetric tensors},
 J. Math. Anal. Appl., \textbf{350} (2009), 416-422.

 \bibitem{FGH} S. Friedland, S. Gaubert, L. Han,
\textit{Perron-Frobenius theorem for nonnegative multilinear forms and extensions},
 Linear Algebra Appl., \textbf{438} (2013), 738-749.

\bibitem{FHB} Y.-Z. Fan, T. Huang, Y.-H. Bao, C.-L. Zhuan-Sun, Y.-P. Li,
\textit{The spectral symmetry of weakly irreducible nonnegative tensors and connected hypergraphs}, Available at arXiv: 1704.08799.

\bibitem{Ha} R. Hartshorne, \textit{Algebraic Geometry}, Springer-Verlag, New York, 1977.


\bibitem{HQS} S. Hu, L. Qi, J. Y. Shao, \textit{Cored hypergraphs, power hypergraphs and their Laplacian $H$-eigenvalues},  Linear Algebra Appl., \textbf{439} (2013), 2980-2998.

\bibitem{HuYe} S. Hu, K. Ye, \textit{Mulplicities of tensor eigenvalues}, Commu. Math. Sci., \textbf{14} (2016), 1049-1071.

\bibitem{LiYY} Y. Li, Q. Yang, Y. Yang, \textit{A new definition of gemoetric mulplicity of eigenvalues of tensors
and some results based on it}, Front. Math. China, \textbf{10}(5)(2015): 1123-1146.

\bibitem{Lim} L.-H. Lim, \textit{Singular values and eigenvalues of tensors: A variational approach}, in Computational Advances in Multi-Sensor Adapative Processing,
2005 1st IEEE International Workshop, IEEE, Piscataway, NJ, 2005, pp. 129-132.

\bibitem{PF} K. Pearson and T. Zhang, \textit{On spectral hypergraph theory of the adjacency tensor}, Graph Combin.,
\textbf{30} (5) (2014): 1233-1248.


\bibitem{Qi1} L. Qi, \textit{Eigenvalues of a real supersymmetric tensor},  J. Symbolic Comput., \textbf{40} (2005), 1302-1324.

\bibitem{QiLuo} L. Qi, Z. Luo, \textit{Tensor Analysis: Spectral Theory and Special Tensors}, SIAM, 2017.

\bibitem{Shao} J. Y. Shao,\textit{ A general product of tensors with applications},  Linear Algebra Appl., \textbf{439} (2013), 2350-2366.

\bibitem{YY1} Y. Yang and Q. Yang,
\textit{Further results for Perron-Frobenius theorem for nonnegative tensors}, SIAM J Matrix Anal. Appl., \textbf{31} (5) (2010), 2517-2530.

\bibitem{YY2} Y. Yang and Q. Yang,
\textit{Further results for Perron-Frobenius theorem for nonnegative tensors II}, SIAM J Matrix Anal. Appl., \textbf{32} (4) (2011), 1236-1250.

\bibitem{YY3} Y. Yang, Q. Yang, \textit{On some properties of nonnegative weakly irreducible tensors}, Available at arXiv: 1111.0713.
\end{thebibliography}
\end{document}